\documentclass[12pt,a4paper]{amsart}
\usepackage{mathrsfs}
\usepackage{amssymb,amsmath,amsthm,color}
\usepackage{graphicx,mcite}
\usepackage{hyperref}
\usepackage{url}
\usepackage{setspace}
\textheight=8.7in \textwidth=5.5in

\newcommand{\loc}{\mbox{loc}}
\newcommand{\Ann}{\mbox{Ann}}
\newtheorem{theorem}{Theorem}[section]
\newtheorem{lemma}[theorem]{Lemma}
\newtheorem{proposition}[theorem]{Proposition}
\newtheorem{corollary}[theorem]{Corollary}

\theoremstyle{definition}

\newtheorem{remark}[theorem]{Remark}

\numberwithin{equation}{section}




\begin{document}

\title[Weighted Twisted Spherical Means]
{Sets of injectivity for weighted twisted spherical means and support theorems}

\author{Rajesh K. Srivastava}
\address{School of Mathematics, Harish-Chandra Research Institute, Allahabad, India 211019.}
\email{rksri@hri.res.in}

\subjclass[2000]{Primary 43A85; Secondary 44A35}

\date{\today}


\keywords{ Hecke-Bochner identity, Heisenberg group, Laguerre \\ polynomials, spherical
harmonics, support theorems, twisted convolution.}

\begin{abstract}
We prove that the spheres centered at origin are sets of injectivity for certain
weighted twisted spherical means on $\mathbb C^n.$ We also prove an analogue of
Helgason's support theorem for weighted Euclidean and twisted spherical means.
\end{abstract}

\maketitle

\section{Introduction}\label{section1}

In this article, we show that the spheres $S_R(o)=\{z\in\mathbb C^n: |z|=R\}$ are
sets of injectivity for the weighted twisted spherical means (WTSM) for a suitable class
of functions on $\mathbb C^n$. The weights here are spherical harmonics on $S^{2n-1}.$
In general, the question of set of injectivity for the twisted spherical means (TSM)
with real analytic weight is still open. We would like to refer to \cite{NRR}, for
some results on the sets of injectivity for the spherical means with real analytic
weights in the Euclidean setup.

\smallskip

Our main result, Theorem \ref{Cth2} is a natural generalization of a result by
Thangavelu et al. \cite{NT}, where it has been proved that the spheres $S_R(o)$'s
are sets of injectivity for the TSM on $\mathbb C^n.$
The twisted spherical mean arises in the study of spherical mean on the
Heisenberg group $\mathbb H^n=\mathbb C^n\times\mathbb R.$ These result can also be
interpreted for the weighted spherical means on the  Heisenberg group. The set
$S=\{(z,t):~|z|=R,~t\in\mathbb R\}\subset\mathbb H^n$ is a set of injectivity for the
weighted spherical means on $\mathbb H^n$ defined by (\ref{exp3}).

\smallskip

In a fundamental result, Helgason proved  a support theorem for continuous function
having vanishing spherical means over a family of spheres, sitting in the exterior of
a ball. That is, if $f$ is a continuous function on $\mathbb R^n,(n\geq2)$ such that
$|x|^kf(x)$ is bounded for each non-negative integer $k,$ then $f$ is supported in ball
$B_r(o)$ if and only if $f\ast\mu_s(x)=0, \forall x\in\mathbb R^n$ and $\forall s>|x|+r,$
(see \cite{H}).
In a recent work \cite{NT2}, Thangavelu and  Narayanan prove a support theorem,
for the TSM for certain subspace of Schwartz class functions on $\mathbb C^n.$
In our previous work \cite{RS}, we have given an exact analogue of Helgason's support
theorem for the TSM on $\mathbb C^n ~(n\geq2).$ For $n=1,$ we have proved a surprisingly
stronger result where we do not need any decay condition. This result has no analogue in
the Euclidean set up. In Theorem \ref{Cth3}, we generalize our idea of support theorem
for the TSM to the WTSM.
At the end, we revisit Euclidean spherical means and prove
Theorem \ref{Cth4}, which is an analogue of Helgason's support theorem for the weighted
spherical means. For some results on support theorem with real analytic weight, in
non-Euclidean set up, we refer to Quinto's works \cite{Q1,Q2,Q3}.

\smallskip

Let $\mu_r$ be the normalized surface measure on sphere $S_r(x).$ Let
$\mathscr F\subseteq L^1_{\loc}(\mathbb R^n).$ We say that $S\subseteq\mathbb R^n$
is a set of injectivity for the spherical means for $\mathscr F$ if for
$f\in\mathscr  F$ with $f\ast\mu_r(x)=0, \forall r>0$ and $\forall x\in S,$
implies $f=0$ a.e.

\smallskip

The results on sets of injectivity differ in the choice of sets and the class of functions
considered. The following result by Agranovsky et al. \cite{ABK} partially describe the sets
of injectivity in $\mathbb R^n.$ The boundary of bounded domain in $\mathbb R^n~(n\geq2)$
is set of injectivity for the spherical means on $L^p(\mathbb R^n),$
$~1\leq p\leq\frac{2n}{n-1}.$ For $p>\frac{2n}{n-1},$ unit sphere $S^{n-1}$ is an example
of non-injectivity set in $\mathbb R^n.$

\smallskip

The range for $p$ in the above result is optimal. That can be seen as follows.
For $\lambda>0,$ define the radial function $\varphi_\lambda$ on $\mathbb R^n$ by
\[\varphi_\lambda(x)=c_n(\lambda|x|)^{-\frac{n}{2}+1}J_{\frac{n}{2}-1}(\lambda|x|),\]
where $J_{\frac{n}{2}-1}$ is the Bessel function of order ${\frac{n}{2}-1}$ and
$c_n$ is the constant such that $\varphi_\lambda(o)=1$. Then the spherical means of
$\varphi_\lambda$ satisfy the relation
\[\varphi_\lambda \ast \mu_r(x)=\varphi_\lambda(r)\varphi_\lambda(x).\]
This shows that if  $\lambda R$ is zero of Bessel function
$J_{\frac{n}{2}-1}$ then $\varphi_\lambda \ast \mu_r(x)=0$ on sphere
$S_R(o)$ and for all $r>0$. Since $\varphi_\lambda\in L^p(\mathbb R^n)$
if and only if $p>2n/(n-1),$ it follows that spheres are not sets of
injectivity for spherical means for $L^p$ for $p>2n/(n-1)$. In a recent
result of Narayanan et al. \cite{NRR}, it has been shown that the boundary
of a bounded domain in $\mathbb R^n$ is a set of injectivity for the
weighted spherical means for $L^p(\mathbb R^n),$ with $1\leq p\leq\frac{2n}{n-1}.$

\bigskip

Next, we come up  with twisted spherical means which arises in the study
of spherical means on Heisenberg group. The group $\mathbb H^n$ as a manifold,
is $\mathbb C^n \times\mathbb R$ with the group law
\[(z, t)(w, s)=(z+w,t+s+\frac{1}{2}\text{Im}(z.\bar{w})),~z,w\in\mathbb C^n
\text{ and }t,s\in\mathbb R.\]

The spherical means of a function $f$ in $L^1(\mathbb H^n)$ are defined by
\begin{equation}\label{exp2}
f\ast\mu_s(z, t)=\int_{|w|=s}~f((z,t)(-w,0))~d\mu_s(w).
\end{equation}
Thus the spherical means can be thought of as convolution operators.
An important technique in many problem on $\mathbb H^n$ is to take
partial Fourier transform in the $t$-variable to reduce matters to
$\mathbb C^n$. This technique works very well with convolution
operator on $\mathbb H^n$  and we will make use of it to analyze
spherical means on $\mathbb H^n$. Let
\[f^\lambda(z)= \int_\mathbb R f(z,t)e^{i \lambda t} dt\]
 be the inverse Fourier transform of $f$ in the $t$-variable.
Then a simple calculation shows that
\begin{eqnarray*}
(f \ast \mu_s)^\lambda(z)&=&\int_{-\infty}^{~\infty}~f \ast \mu_s(z,t)e^{i\lambda t} dt\\
&=&\int_{|w| = s}~f^\lambda (z-w)e^{\frac{i\lambda}{2} \text{Im}(z.\bar{w})}~d\mu_s(w)\\
&=&f^\lambda\times_\lambda\mu_s(z),
\end{eqnarray*}
where $\mu_s$ is now being thought of as normalized surface measure
on the sphere $S_s(o)=\{ z \in \mathbb C^n: |z|=s\}$ in $\mathbb
C^n.$
Thus the spherical mean $f\ast \mu_s$ on the Heisenberg group can be
studied using the $\lambda$-twisted spherical mean $f^\lambda
\times_\lambda \mu_s$ on $\mathbb C^n.$ For $\lambda \neq 0,$
a further scaling argument shows that it is enough to study these
means for the case of $\lambda = 1.$

Let $\mathcal F\subseteq L^1_{\loc}(\mathbb C^n).$  We say $S\subseteq\mathbb C^n$
is a set of injectivity for twisted spherical means for $\mathcal F$ if for
$f\in\mathcal F$ with $f\times\mu_r(z)=0, \forall r>0$ and $\forall z\in S,$
implies $f=0$ a.e. on $\mathbb C^n.$

As in the Euclidean case, it would be natural to ask if the
boundaries of bounded domains in $\mathbb C^n$  continue to be sets
of injectivity for $L^p$ spaces for the twisted spherical means.
However, this is no longer true as can be seen by considering the
Laguerre functions $\varphi^{n-1}_k, k\in\mathbb Z_+,$ given by
$\varphi^{n-1}_k(z)=L_k^{n-1}\left(\frac{1}{2}|z|^2\right)e^{-\frac{1}{4}|z|^2},$
where $L_k^{n-1}$'s are the Laguerre polynomials of degree $k$ and
type $n-1.$ These functions satisfy the functional relations
\begin{equation}\label{Cexp22}
\varphi^{n-1}_k\times\mu_r(z)=\frac{k!(n-1)!}{(n+k-1)!}\varphi^{n-1}_k(r)\varphi^{n-1}_k(z),~k\in\mathbb Z_+.
\end{equation}
For $k=0, \varphi^{n-1}_0(z)=e^{-\frac{1}{4}|z|^2},$ which is never
zero. Otherwise, if $\frac{1}{2}R^2$ is a zero of $L_k^{n-1}$  for
$k=1,2,\ldots,$ then $\varphi^{n-1}_k\times\mu_r(z)=0$ on sphere
$S_R(o)$ for all $r>0.$ Since $\varphi^{n-1}_k$ are in Schwartz
class, it follows that spheres, and hence boundaries of bounded
domains are not sets of injectivity for $L^p(\mathbb C^n)$ for any
$p~,1\leq p\leq\infty.$ As $e^{\frac{1}{4}|z|^2}\varphi^{n-1}_k,~
k=1,2, \ldots,$ does not belong to $L^p(\mathbb C^n)$ for  $1\leq
p\leq\infty,$ it would be interesting to know if
 boundaries of bounded
domains in $\mathbb C^n$ are sets of injectivity for the class  of
functions  $f$ such that $f(z)e^{\frac{1}{4}|z|^2}\in L^p(\mathbb
C^n)$ for some $p,1\leq p\leq\infty$. In \cite{AR} the authors
answer this for a yet smaller function space. The boundary of a bounded
domain in $\mathbb C^n$ is set of injectivity for function $f$ with
$f(z)e^{(\frac{1}{4}+\epsilon)|z|^2}\in L^p(\mathbb C^n)$ for some
$\epsilon>0$ and $1\leq p\leq\infty$. In the light of the above discussion
an optimal result would be proving this result for $\epsilon=0$. This in
general is an open problem, but in the special case of $\Gamma=S^{2n-1},$
the result has been established by Narayanan and Thangavelu \cite{NT}.

\begin{theorem} \label{Cth9}\emph{\cite{NT}}
Let $f$ be a function on $\mathbb C^n$ such that $e^{\frac{1}{4}|z|^2}f(z)\in
L^p(\mathbb C^n),$ for $1\leq p\leq \infty$. If $f\times\mu_r(z)=0$ on sphere
$S_R(o)$ and for all $r>0$, then $f=0$ a.e. on $\mathbb C^n.$
\end{theorem}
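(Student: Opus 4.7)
My plan is to reduce the theorem, via the bigraded spherical harmonic decomposition on $S^{2n-1}$, to a one-dimensional Laguerre-type problem, and then exploit a Hecke--Bochner identity for the twisted spherical mean. First I would write $f = \sum_{p,q\geq 0} f_{p,q}$, where $f_{p,q}(z) = g_{p,q}(|z|)\,P_{p,q}(z)$ and $P_{p,q}$ is a solid spherical harmonic of bidegree $(p,q)$. Because twisted convolution with a radial measure preserves each bigraded $U(n)$-isotypic component, the hypothesis $f\times\mu_r\equiv 0$ on $S_R(o)$ descends to each $f_{p,q}$; the weight condition $e^{|z|^2/4}f\in L^p(\mathbb C^n)$ also passes to each component, so it suffices to handle a single pair $(p,q)$.

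Next, expanding the radial factor $g_{p,q}$ in the Laguerre basis $\{L_k^{p+q+n-1}(s^2/2)\,e^{-s^2/4}\}_{k\geq 0}$ and applying the Hecke--Bochner analogue of the identity (\ref{Cexp22}) with weight $P_{p,q}$, one obtains
\[
f_{p,q}\times\mu_r(z) \;=\; P_{p,q}(z) \sum_{k\geq 0} a_k\,c_k\, L_k^{p+q+n-1}\!\left(\tfrac{r^2}{2}\right) e^{-r^2/4} \,L_k^{p+q+n-1}\!\left(\tfrac{|z|^2}{2}\right) e^{-|z|^2/4},
\]
where $\{a_k\}$ are the Laguerre coefficients of $g_{p,q}$ and $c_k$ is an explicit positive constant. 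Setting $|z|=R$ (and using that $P_{p,q}$ is nonzero almost everywhere on $S_R(o)$ whenever $P_{p,q}\not\equiv 0$) gives a Laguerre series in $r$ that vanishes identically; completeness of the Laguerre system then forces $a_k\,L_k^{p+q+n-1}(R^2/2) = 0$ for every $k$. Hence $a_k = 0$ whenever $L_k^{p+q+n-1}(R^2/2) \neq 0$.

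The crux of the argument is dealing with the exceptional indices $k$ for which $L_k^{p+q+n-1}(R^2/2) = 0$. For such an index, the surviving contribution to $f_{p,q}$ is a nonzero multiple of $L_k^{p+q+n-1}(|z|^2/2)\,e^{-|z|^2/4}\,P_{p,q}(z)$; multiplying by $e^{|z|^2/4}$ yields the honest polynomial $L_k^{p+q+n-1}(|z|^2/2)\,P_{p,q}(z)$, which does not lie in $L^p(\mathbb C^n)$ for any $1\leq p\leq\infty$. This is precisely the role of the weighted hypothesis: it kills the very counterexamples exhibited in (\ref{Cexp22}) and forces $a_k=0$ also for these exceptional indices. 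Consequently $g_{p,q}\equiv 0$, and summing over $(p,q)$ gives $f=0$. The principal technical obstacle I anticipate is the rigorous justification of the termwise Laguerre expansion inside the twisted convolution, which requires one to verify that the weight condition $e^{|z|^2/4}f\in L^p$ places each $g_{p,q}$ in the weighted $L^2$ space on which $\{L_k^{p+q+n-1}(s^2/2)\,e^{-s^2/4}\}$ is a complete orthogonal system, and that the interchange of summation and twisted convolution with $\mu_r$ is legitimate uniformly in $r$.
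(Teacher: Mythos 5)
Your overall strategy is the same as the one this paper uses for its generalization (Proposition \ref{Cprop6}): decompose into bigraded spherical harmonic components, apply the Hecke--Bochner identity, use orthogonality of the Laguerre functions in the $r$-variable to get $a_k\,L_k^{n+p+q-1}(R^2/2)=0$, and invoke the weighted hypothesis to kill the exceptional indices. But there is a genuine gap at exactly the crucial point: you treat ``the exceptional indices $k$ for which $L_k^{p+q+n-1}(R^2/2)=0$'' as if the surviving contribution were a single term (or at worst a finite sum), so that multiplying by $e^{|z|^2/4}$ produces ``the honest polynomial'' $L_k^{p+q+n-1}(|z|^2/2)P_{p,q}(z)$. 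Nothing in your argument rules out that, for the fixed value $x_0=R^2/2$, \emph{infinitely many} Laguerre polynomials of the given type vanish at $x_0$; in that case the surviving part of $g_{p,q}$ is an infinite Laguerre series supported on the exceptional set, $e^{|z|^2/4}f_{p,q}$ is no longer a polynomial, and the contradiction with $e^{|z|^2/4}f\in L^p$ evaporates. This finiteness is not a triviality: the paper imports the Filaseta--Lam irreducibility theorem (Theorem \ref{Cth10}) precisely to handle it, deducing in Corollary \ref{cor1} that all but finitely many $L_k^{\alpha}$ are irreducible over $\mathbb Q$, hence pairwise coprime, hence no two of them share a real zero; therefore at most one large $k$ (plus at most finitely many small ones) can be exceptional, and only then does your ``nonzero polynomial is not in $L^p$, $1\le p\le\infty$'' step apply (note one should also observe the polynomial is nonconstant, which holds since $L_0\equiv 1$ forces exceptional $k\ge 1$). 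Without some such arithmetic input your proof is incomplete.

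Two smaller points. First, your displayed identity has the wrong Laguerre type in the $r$-factor: expanding $f\times\mu_r$ through $\varphi_k^{n-1}\times\mu_r=B_k^n\varphi_k^{n-1}(r)\varphi_k^{n-1}$ (identity (\ref{Cexp22})) and then applying Lemma \ref{lemma4C} to $f_{p,q}=g\,P_{p,q}$ gives an $r$-factor $\varphi_{k}^{n-1}(r)$ (type $n-1$), while the type $n+p+q-1$ enters only in the $z$-variable; this does not affect the skeleton of the argument, since the relevant orthogonal family in $r$ still separates the coefficients, but the exceptional condition must be read off from the $z$-factor $L_{k-p}^{n+p+q-1}(R^2/2)$. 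Second, the $L^2$/termwise-expansion issue you flag at the end is handled in the paper more simply than by weighted-$L^2$ verification: one twists with a radial, compactly supported approximate identity $\varphi_\epsilon$, which commutes with the whole setup and reduces matters to $f\in L^2(\mathbb C^n)$; you may as well adopt that device.
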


\begin{remark}\label{rk2}
For $\eta\in\mathbb C^n,$ define the left twisted translate by
\[\tau_\eta f(\xi)=f(\xi-\eta)e^{\frac{i}{2}\text{Im}(\eta.\bar\xi)}.\]
Then $\tau_{\eta}(f\times\mu_r)=\tau_{\eta}f\times\mu_r.$ Since the
function space considered as in the above Theorem \ref{Cth9} is not twisted
translation invariant, it follows that a sphere centered off origin is not
set of injectivity for the TSM on $\mathbb C^n.$
\end{remark}

Our aim is to consider some special weighted twisted spherical means
and prove that Theorem \ref{Cth9} can be extended for those means.
For this, let $\mathbb Z_+$ denote the set of all non negative-integers. For
$s,t\in\mathbb Z_+$, let $P_{s,t}$ denote the space of all
polynomials $P$ in $z$ and $\bar{z}$ of the form
\[P(z)=\sum_{|\alpha|=s}\sum_{|\beta|=t}c_{\alpha\beta} z^\alpha\bar{z}^\beta.\]
Let $H_{s,t}=\{P\in P_{s,t}:\Delta P=0\},$ where $\Delta$ is the
standard Laplacian on $\mathbb C^n.$ Let $\{P\in P_{s,t}^j:~1\leq j\leq d(s,t)=\dim H_{s,t}\}$
be an orthonormal basis of $H_{s,t}$ and $d\nu_{r,j}=P_{st}^jd\mu_r.$ Then
$d\nu_{r,j}$ is a signed measure on the sphere $S_r(o)$ in $\mathbb C^n.$
As similar to (\ref{exp2}), we can define the weighted spherical means of
a function $f\in L^1(\mathbb H^n)$ by
\begin{equation}\label{exp3}
f\ast\nu_{r,j}(z, t)=\int_{|w|=s}~f((z,t)(-w,0))~P_{st}^j(w)d\mu_s(w).
\end{equation}
By taking the inverse Fourier transform in $t$ variable at $\lambda=1,$
we can write
\[f\times\nu_{r,j}(z)=\int_{S_r(o)}f(z-w)e^{\frac{i}{2}\text{Im}(z.\bar{w})}P_{st}^j(w)d\mu_r(w).\]
We call $f\times\nu_{r,j}$ the weighted twisted spherical mean (WTSM)
of function $f\in L_{\loc}(\mathbb C^n).$ We prove the following result
for the injectivity of the WTSM.

\begin{theorem}\label{Cth2}
Let $f$ be a function on $\mathbb C^n$ such that $e^{\frac{1}{4}|z|^2}f(z)\in L^p(\mathbb C^n),$
$1\leq p<\infty$. If $f\times\nu_{r,j}(z)=0$ on sphere $S_R(o)$, $\forall ~r>0$ and $\forall
~j,1\leq j\leq d(s,t)$, then $f=0$ a.e. on $\mathbb C^n.$
\end{theorem}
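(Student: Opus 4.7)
The plan is to reduce Theorem \ref{Cth2} to the unweighted injectivity result, Theorem \ref{Cth9}, by decomposing $f$ into bigraded spherical harmonic components. Using the $U(n)$-isotypic decomposition $L^2(S^{2n-1}) = \bigoplus_{p,q \geq 0} H_{p,q}$, I would write
\[
f(z) = \sum_{p,q} \sum_{l=1}^{d(p,q)} g_{p,q}^l(|z|) \, P_{p,q}^l(z),
\]
with $\{P_{p,q}^l\}$ an orthonormal basis of $H_{p,q}$ and $g_{p,q}^l$ radial. Since twisted convolution commutes with the $U(n)$-action, the WTSM $f \times \nu_{r,j}$ should couple only a restricted collection of $(p,q)$-isotypes of $f$ for each weight bidegree $(s,t)$.

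Next, I would apply a Hecke--Bochner type identity for twisted convolution in the presence of a bigraded harmonic weight. For $P \in H_{p,q}$ and radial $g$, $(gP) \times \nu_{r,j}(z)$ can be computed by expanding the integrand in bigraded harmonics and exploiting orthogonality on $S^{2n-1}$; this reduces the coupling to Clebsch--Gordan--type decompositions of $H_{p,q} \otimes H_{s,t}$. The hypothesis that $f \times \nu_{r,j}(z) = 0$ on $S_R(o)$ for all $r > 0$ and all $j = 1, \ldots, d(s,t)$, combined with the linear independence of $\{P_{s,t}^j\}$, should then force the \emph{unweighted} TSM of suitable auxiliary functions $F_{p,q}^l$ --- each built from a single bigraded component of $f$ --- to vanish identically on $S_R(o)$ for every $r > 0$.

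Finally, I would verify that the growth hypothesis on $f$ passes to each $F_{p,q}^l$, in the sense $e^{|z|^2/4} F_{p,q}^l \in L^p(\mathbb C^n)$, and then invoke Theorem \ref{Cth9} to conclude $F_{p,q}^l \equiv 0$, hence $g_{p,q}^l \equiv 0$ for every $(p,q,l)$, which reassembles to $f = 0$ a.e. The main obstacle I anticipate is the Hecke--Bochner bookkeeping in the second step: making the coupling among $(p,q)$-isotypes induced by $\nu_{r,j}$ explicit enough to invert component by component, and simultaneously confirming that the polynomial factors arising when one extracts the bigraded pieces do not destroy the exponential weight condition demanded by Theorem \ref{Cth9} --- a subtlety that is absent in the unweighted setting of \cite{NT} and is the sole reason Theorem \ref{Cth2} requires a separate argument rather than being a routine corollary.
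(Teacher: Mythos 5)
Your opening reduction (decomposing $f$ into bigraded pieces via the $U(n)$ action) is indeed how the paper begins (Lemma \ref{lemma7C}, Remark \ref{Crk2}), but the central step of your plan --- that the vanishing of the weighted means $f\times\nu_{r,j}$ on $S_R(o)$ forces the \emph{unweighted} TSM of auxiliary functions built from single bigraded components of $f$ to vanish, so that Theorem \ref{Cth9} can be invoked --- does not work. The weighted mean with a weight of bidegree $(p,q)$, $q\geq 1$, annihilates entire spectral pieces: by Lemma \ref{lemma3C}, $\varphi_k^{n-1}\times\nu_r\equiv 0$ whenever $k<q$ (Remark \ref{Crk1} is the extreme case $\varphi_0^{n-1}\times\nu\equiv 0$). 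Consequently the hypothesis carries no information whatsoever about those components, and no auxiliary function containing them can be shown to have vanishing unweighted means (the Gaussian $\varphi_0^{n-1}$ has nowhere-vanishing TSM). Symptomatically, your sketch never uses $p<\infty$ in an essential way, yet the theorem is false for $p=\infty$ precisely because of these annihilated Gaussian-type components; any correct argument must make the finiteness of $p$ do real work at exactly this point. There is also a technical obstruction to the natural way of trading the weight for a differential operator applied to $f$ (as in Lemma \ref{lemma6C}): that integration-by-parts argument needs the means to vanish for centers ranging over an open set, whereas here the centers lie only on the sphere $|z|=R$; and even if it applied, the growth condition $e^{|z|^2/4}f\in L^p$ does not automatically pass to derivatives of $f$.

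For comparison, the paper's Proposition \ref{Cprop6} argues spectrally and never reduces to Theorem \ref{Cth9}: after reducing to $f=\tilde a\,P_{s,t}$ with weight $P_1(z)=z_1^p\bar z_2^q$ (and an approximate-identity step to place $f$ in $L^2$), it expands $f$ in the special Hermite series, uses Lemma \ref{lemma3C} together with orthogonality of $\{\varphi_{k-q}^{n+p+q-1}(r)\}$ in the $r$-variable to get $f\times P_1\varphi_{k-q}^{n+p+q-1}(z)=0$ for all $k\geq q$ and $|z|=R$, rewrites $P_1\varphi_{k-q}^{n+p+q-1}$ as $P_1(\tilde A)\varphi_k^{n-1}$ (Lemma \ref{lemma5C}), and then uses right invariance of $P_1(\tilde A)$ and the Hecke--Bochner identity (Lemma \ref{lemma4C}) to reduce everything to $\langle\tilde a,\varphi_{k-s}^{\gamma-1}\rangle$ times an explicit Laguerre expression at $\rho=R$, where $\gamma=n+s+t$; extracting the highest-degree harmonic coefficient and applying Lemma \ref{lemma9C} yields $\varphi_{k-s-q}^{\gamma+p+q-1}(R)=0$. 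The decisive input is then number-theoretic: the Filaseta--Lam irreducibility theorem (Corollary \ref{cor1}) shows that $L_{k-s-q}^{\gamma+p+q-1}(\tfrac12 R^2)$ can vanish for at most one value of $k$, so all but finitely many coefficients $\langle\tilde a,\varphi_{k-s}^{\gamma-1}\rangle$ vanish, and the residual finite linear combination of Laguerre functions is killed by the hypothesis $e^{|z|^2/4}f\in L^p$ with $p<\infty$. Your proposal contains neither the number-theoretic step (needed because only the single radius $R$ of centers is available) nor any mechanism for disposing of the spectral components invisible to the weighted means, so it cannot be completed along the lines described.
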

For $p=\infty,$ Theorem \ref{Cth2} does not hold as can be seen in Remark \ref{Crk1}.
Further, we prove a support theorem for the weighted twisted spherical means.
\begin{theorem}\label{Cth3}
Let $f$ be a smooth function on $\mathbb C^n$ such that for each
non-negative integer $k$, $|z|^{k}|f(z)|\leq C_k~e^{-\frac{1}{4}|z|^2}$.
Let $f\times\nu_{r,j}(z)=0,$ for all $z\in \mathbb C^n$ and $ r>|z|+B $
and for all $j,1\leq j\leq d(s,t)$. Then $f=0,$ whenever $|z|>B$.
\end{theorem}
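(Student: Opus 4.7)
My plan is to reduce Theorem \ref{Cth3} to a countable family of one-variable support statements by combining the bigraded spherical harmonic decomposition of $f$ with a Hecke--Bochner type identity for the weighted twisted convolution. Writing $\xi=z/|z|$ on $S^{2n-1}$, I would expand
\[
f(z) = \sum_{p,q\ge 0}\sum_{l=1}^{d(p,q)} f_{pq}^{\,l}(|z|)\, Y_{pq}^{\,l}(\xi),
\]
where $\{Y_{pq}^{\,l}\}$ is an orthonormal basis of $H_{p,q}$ on the sphere. The pointwise bound $|z|^k|f(z)|\le C_k e^{-|z|^2/4}$ transfers, via integration against each $Y_{pq}^{\,l}$ on spheres of radius $\rho$, to comparable decay for every radial profile $f_{pq}^{\,l}(\rho)$, so that each $f_{pq}^{\,l}$ enjoys the hypotheses required by an eventual Helgason-style argument.

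The technical heart of the proof is a Hecke--Bochner formula for the WTSM. For a radial $g$ and a solid harmonic $P\in H_{p,q}$, the unweighted identity used in \cite{NT} and \cite{RS} expresses $(Pg)\times\mu_r$ as $P$ times a Laguerre-type radial kernel. To incorporate the weight $P_{s,t}^{\,j}$, I would expand the pointwise product $P\cdot P_{s,t}^{\,j}$ through the Clebsch--Gordan decomposition of $H_{p,q}\otimes H_{s,t}$ as a $U(n)$-module, obtaining a finite sum of harmonic polynomials in various $H_{p',q'}$ multiplied by powers of $|w|^{2}=r^{2}$. Substituting this back into the defining integral for $f\times\nu_{r,j}$ realizes it as a finite combination of ordinary twisted spherical means acting on the radial profiles $f_{pq}^{\,l}$; orthogonality of spherical harmonics in the $z$-variable then isolates the contribution of each $(p',q',l')$-coefficient. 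Running $j$ over $\{1,\dots,d(s,t)\}$ produces a linear system in $(p,q,l)$ whose kernels are explicit Laguerre/Bessel-type expressions, and whose vanishing under the hypothesis $f\times\nu_{r,j}(z)=0$ for $r>|z|+B$ gives a family of one-variable integral equations for the $f_{pq}^{\,l}$ valid in the exterior regime. Absorbing the Gaussian $e^{-|z|^2/4}$ into a dimension shift, exactly as in \cite{RS}, rewrites each such equation as the vanishing of a Euclidean spherical mean on a half-line, and Helgason's support theorem then forces $f_{pq}^{\,l}(\rho)=0$ for $\rho>B$. Summing over $(p,q,l)$ yields $f(z)=0$ whenever $|z|>B$.

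The principal obstacle is verifying that, as $j$ ranges over $\{1,\dots,d(s,t)\}$, the weighted Hecke--Bochner system actually separates the coefficients $f_{pq}^{\,l}$ rather than only pinning down coupled combinations. The Clebsch--Gordan decomposition mixes the $(p,q)$-index of $f$ with the fixed $(s,t)$-index of the weight, so one must argue that the resulting block of radial kernels is either triangular in $(p,q)$ (for instance by showing that $(p',q')$ arising in $H_{p,q}\otimes H_{s,t}$ satisfies $p'+q'\le p+q+s+t$ with the extremal component giving a non-vanishing leading coefficient) or invertible by a non-degeneracy computation on the Laguerre kernels. Once this algebraic separation step is in hand, the analytic reduction to Helgason's theorem is essentially the same as in our previous work \cite{RS}; the weight only changes the explicit form of the one-dimensional integral equations, not the strategy for solving them.
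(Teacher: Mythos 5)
There is a genuine gap, and it sits exactly where you acknowledge it: the ``algebraic separation step.'' Your plan hinges on expanding the pointwise product $P\cdot P_{s,t}^{j}$ through the Clebsch--Gordan decomposition of $H_{p,q}\otimes H_{s,t}$ and then arguing that, as $j$ runs over $1,\dots,d(s,t)$, the resulting block of radial kernels is triangular or invertible so that the individual coefficients $f_{pq}^{\,l}$ are isolated. You do not prove this, and it is precisely the crux of the theorem; without it the hypothesis only pins down coupled combinations of coefficients and nothing follows. The paper avoids this difficulty altogether by a different mechanism: instead of decomposing the product of harmonics, it transfers the weight from the measure onto the function as a left-invariant differential operator. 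Integrating $\partial_{\bar w_1}$ of the integrand over annuli $\Ann(r,\rho)$, differentiating in $\rho$, and rearranging (Lemma \ref{lemma6C}) shows that $f\times\nu^{p,q}_{\rho}(z)=0$ for $\rho>|z|+B$ implies $P_1(\widetilde Z)f\times\mu_\rho(z)=0$ there, i.e.\ $P_1(\widetilde Z)f\in Z_{B,\infty}(\mathbb C^n)$; after the $U(n)$-averaging reduction to $f=\tilde a\,P_{s,t}$ (Lemma \ref{lemma8C}), the explicit formula for $\widetilde Z_1^*f$ together with Theorem \ref{Bth1} yields first-order equations for $\tilde a$ with explicit solutions, which the decay hypothesis then annihilates for $\rho>B$.

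A second problem is your final analytic step. In the twisted setting, vanishing of (unweighted) twisted spherical means for $r>|z|+B$ does \emph{not} force the function to vanish outside $B_B(o)$: the class $Z_{B,\infty}(\mathbb C^n)$ contains nonzero functions, and Theorem \ref{Bth1} characterizes their spherical harmonic coefficients as explicit finite combinations of $e^{\pm\frac14\rho^2}\rho^{-2(\cdot)}$. So the assertion that ``Helgason's support theorem then forces $f_{pq}^{\,l}(\rho)=0$'' after absorbing the Gaussian is not a valid conclusion; the correct endgame is to obtain the explicit form of the radial coefficient from the $Z_{B,\infty}$ characterization and only then invoke the decay $|z|^{k}|f(z)|\le C_k e^{-\frac14|z|^2}$ to kill it. As written, your argument both leaves the key linear-algebraic step unverified and misidentifies the support-theorem input, so it does not yet constitute a proof.
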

In the end, we revisit Euclidean spherical means and prove the support
Theorem \ref{Cth4} for the weighted spherical means. For $k\in \mathbb Z^+$,
let $P_k$ denote the space of all homogeneous polynomials $P$ of degree $k.$
Let $H_k = \{ P \in P_k~:~\Delta P =0 \}.$ The elements of $H_k$ are called
the solid spherical harmonics of degree $k.$  Let $\{P_{kj}:~1\leq j\leq d_k=\dim H_k\}$
be an orthonormal basis for $H_k.$ Define the weighted spherical mean of
function $f\in L_{\loc}^1(\mathbb R^n)$ by
\[f\ast\mu_{r,j}^k(x)=\int_{S_r(o)}f(x+y)P_{kj}(y)d\mu_r(y).\]
\begin{theorem}\label{Cth4}
Let $f$ be a smooth function on $\mathbb R^n$ such that $|x|^mf(x)$ is
bounded for each $m\in\mathbb Z_+$. Let $f\ast\mu_{r,j}^k(x)=0,$ for all
$x\in\mathbb R^n,~r>|x|+B$ and for all $j,1\leq j\leq d_k$. Then $f=0$
whenever $|x|>B$.
\end{theorem}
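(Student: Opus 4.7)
The plan is to reduce Theorem \ref{Cth4} to Helgason's classical (unweighted) support theorem via the spherical harmonic decomposition of $f$ together with a Funk-Hecke computation.

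I would write
$$f(x) = \sum_{k'=0}^\infty \sum_{j'=1}^{d_{k'}} f_{k'j'}(|x|)\,Y_{k'j'}(x/|x|),$$
where $Y_{k'j'}=P_{k'j'}|_{S^{n-1}}$ and each radial component $f_{k'j'}$ inherits the polynomial decay of $f$. Substituting the expansion into
$$f\ast\mu_{r,j}^k(\rho\xi) = c_n\, r^k\int_{S^{n-1}} f(\rho\xi+r\omega)\,Y_{kj}(\omega)\,d\sigma(\omega)$$
(with $x = \rho\xi$, $\xi \in S^{n-1}$) and integrating with the aid of the Funk-Hecke formula and the Clebsch-Gordan product rule for spherical harmonics, the weighted spherical mean should decompose as
$$f\ast\mu_{r,j}^k(\rho\xi)=\sum_{l,m} Y_{lm}(\xi)\,\Phi_{l,m}^{k,j}(\rho,r),$$
where each $\Phi_{l,m}^{k,j}(\rho,r)$ is a finite sum (over the $k'$ satisfying $|k-k'|\leq l \leq k+k'$ with matching parity) of explicit one-dimensional integral transforms of the $f_{k'j'}$ against Gegenbauer-Bessel kernels.

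Projecting the hypothesis $f\ast\mu_{r,j}^k(\rho\xi)=0$ against each $Y_{lm}(\xi)$ on $S^{n-1}$ isolates $\Phi_{l,m}^{k,j}(\rho,r) = 0$ for every $l, m$ and every $r > \rho + B$. I would then process these scalar identities in order of increasing $l$. The case $l = 0$ is automatically decoupled: only $k' = k$ contributes, by the Schur-type orthogonality of distinct spherical harmonics, and varying $j$ over $1 \leq j \leq d_k$ produces one identity per $f_{kj'}$. Each such identity can be recognized, via the Bochner identification of spherical-harmonic-weighted radial functions on $\mathbb R^n$ with plain radial functions on $\mathbb R^{n+2k}$, as the vanishing hypothesis of the classical unweighted Helgason support theorem applied to $f_{kj'}$, yielding $f_{kj'}(s) = 0$ for $s > B$. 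The higher-$l$ identities couple the unknowns $f_{k\pm l,\,j''}, f_{k\pm(l-2),\,j''},\ldots$; I would eliminate the components already known to vanish on $\{s > B\}$ (whose contribution integrates only against the interval $[0,B]$ and so becomes an admissible perturbation) and isolate each new $f_{k'j'}$ by varying $j$ and $m$, applying Helgason's theorem once more. Summing over all $(k', j')$ gives $f(x) = 0$ for $|x| > B$.

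The main obstacle is the explicit Funk-Hecke/Clebsch-Gordan computation and, in particular, verifying at each inductive stage that the system of one-dimensional identities obtained by varying $j$ and $m$ (for fixed $l$) is rich enough to pin down every newly appearing radial component via a single application of Helgason's theorem. This parallels the corresponding calculation for the weighted twisted spherical means in Theorem \ref{Cth3}, with the Hecke-Bochner identity on $\mathbb C^n$ replaced by the classical Funk-Hecke formula on $S^{n-1}$.
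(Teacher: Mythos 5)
Your plan has a genuine gap at its central step: the passage from the projected identities $\Phi^{k,j}_{l,m}(\rho,r)=0$, $r>\rho+B$, to ``applying Helgason's theorem''. After substituting the expansion of $f$ and projecting in $\xi$, what you actually obtain for each $(l,m)$ is the vanishing, on the region $r>\rho+B$, of certain Gegenbauer/Clebsch--Gordan kernel transforms of the radial profiles $f_{k'j'}$ --- not the spherical means of the functions $f_{k'j'}Y_{k'j'}$. Helgason's support theorem (and the Bochner lift of $\tilde a P$, $P\in H_{k}$, $\tilde a$ radial, to a radial function on $\mathbb R^{n+2k}$) concerns the unweighted means $g\ast\mu_r(x)$ vanishing for $r>|x|+B$; it says nothing about these mixed transforms in which the weight degree $k$ and the component degree $k'$ differ. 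Even in your decoupled case $l=0$, carrying out the computation (Fubini in $\xi$ and $\omega$) turns the hypothesis into $\int_{S^{n-1}}Y_{kj}(\omega)\,\bigl(f_{kj'}Y_{kj'}\bigr)\ast\mu_\rho(r\omega)\,d\sigma(\omega)=0$ for $r>\rho+B$, i.e.\ information about means over spheres of radius $\rho$ centered at points of norm $>\rho+B$ --- a family of spheres contained in the exterior of the ball, which is \emph{not} the Helgason family (here one can still conclude, e.g.\ by letting $\rho\to0$, but that trick is unavailable for $k'\neq k$). For $l\geq1$, where the genuinely new components $f_{k\pm l,\,j'}$ enter, you would need both a support theorem for each of the resulting kernel transforms and a verification that the $d_kd_l$ equations obtained by varying $j$ and $m$ have full rank on the new components; you explicitly leave both points open, and they are exactly the analytic content of Theorem \ref{Cth4}, so the proposal is a programme rather than a proof. (A smaller slip: you discard the already-handled components as contributions ``against $[0,B]$''; if they contributed nontrivially they could not be discarded --- in fact their contribution is identically zero because every sphere $S_r(x)$ with $r>|x|+B$ lies in $\{|y|>B\}$.)

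The paper's route avoids this entirely. After the routine reduction (Lemma \ref{lemma10C}) to $f=\tilde a(|x|)P_s$, the key device is Lemma \ref{lemma2C}: taking the weight $(x_1+ix_2)^k$, integrating $\partial_{\bar w_1}\bigl(f(x+\cdot)\,w_1^{k-1}\bigr)$ over an annulus and differentiating in the outer radius converts the weighted hypothesis into the statement that $\partial_{\bar z_1}^k f$ has vanishing \emph{unweighted} means for $r>|x|+B$, i.e.\ $\partial_{\bar z_1}^k f\in Z_{B,\infty}(\mathbb R^n)$. The Epstein--Kleiner characterization (Theorem \ref{Bth5}) then gives the spherical harmonic coefficients of $\partial_{\bar z_1}^k f$ explicitly as finite sums of powers of $\rho$; expanding $\bar\partial_1(\tilde aP_s)$ into its $H_{s+1}$ and $H_{s-1}$ parts yields first-order ODEs for $\tilde a$, whose solutions are again finite sums of powers, and the rapid decay of $f$ forces $\tilde a=0$ for $\rho>B$ (Proposition \ref{Cprop2}). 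If you wish to salvage your approach, you would have to prove, for each triple $(k,k',l)$, an injectivity/support theorem for the corresponding Gegenbauer-kernel transform --- which is essentially what Lemma \ref{lemma2C} together with Theorem \ref{Bth5} accomplishes in one stroke, without any Clebsch--Gordan analysis.
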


\section{Preliminaries}\label{section2}
 We need the following basic facts from the theory of bigraded
spherical harmonics (see \cite{T}, p.62 for details). We shall use
the notation $K=U(n)$ and $M=U(n-1).$ Then $S^{2n-1}\cong K/M$ under
the map $kM\rightarrow k.e_n,$ $k\in U(n)$ and $e_n=(0,\ldots
,1)\in \mathbb C^n.$ Let $\hat{K}_M$ denote the set of all
equivalence classes of irreducible unitary representations of $K$
which have a nonzero $M$-fixed vector. It is known that for each
representation in $\hat{K}_M$ has a unique nonzero $M$-fixed vector,
up to a scalar multiple.

For a $\delta \in \hat{K}_M,$ which is realized on $V_{\delta},$ let
$\{e_1,\ldots ,e_{d(\delta)}\}$ be an orthonormal basis of
$V_{\delta}$ with $e_1$ as the $M$-fixed vector. Let
$t_{ij}^{\delta}(k)=\langle e_i,\delta (k)e_j \rangle ,$ $k\in K$
and $\langle , \rangle$ stand for the innerproduct on $V_{\delta}.$
By Peter-Weyl theorem, it follows that $\{\sqrt{d(\delta
)}t_{j1}^{\delta}:1\leq j\leq d(\delta ),\delta\in\hat{K}_M\}$ is an
orthonormal basis of $L^2(K/M)$ (see \cite{T}, p.14 for details).
Define $Y_j^{\delta} (\omega )=\sqrt{d(\delta )}t_{j1}^{\delta}(k),$
where $\omega =k.e_n\in S^{2n-1},$ $k \in K.$ It then follows that
$\{Y_j^{\delta}:1\leq j\leq d(\delta ),\delta\in \hat{K}_M, \}$
forms an orthonormal basis for $L^2(S^{2n-1}).$

For our purpose, we need a concrete realization of the
representations in $\hat{K}_M,$ which can be done in the following
way. See \cite{Ru}, p.253, for details.

For $p,q\in\mathbb Z_+$, let $P_{p,q}$ denote the space of all
polynomials $P$ in $z$ and $\bar{z}$ of the form
\[P(z)=\sum_{|\alpha|=p}\sum_{|\beta|=q}c_{\alpha\beta} z^\alpha
\bar{z}^\beta.\]
Let $H_{p,q}=\{P\in P_{p,q}:\Delta P=0\}.$ The elements of $H_{p,q}$ are
called the bigraded solid harmonics on $\mathbb C^n.$ The group  $K$ acts
on $H_{p,q}$ in a natural way. It is easy to see that the space $H_{p,q}$
is $K$-invariant. Let $\pi_{p,q}$ denote the corresponding representation
of $K$ on $H_{p,q}.$ Then representations in $\hat{K}_M$ can be identified,
up to unitary equivalence, with the collection $\{\pi_{p,q}: p,q \in \mathbb Z_+\}.$

Define the bigraded spherical harmonic by $Y_j^{p,q}(\omega)=\sqrt{d(p,q )}t_{j1}^{p,q}(k).$
Then $\{Y_j^{p,q}:1\leq j\leq d(p,q)~\text{and}~p,q \in \mathbb Z_+ \}$ forms an orthonormal basis for
$L^2(S^{2n-1}).$ Therefore for a continuous function $f$ on $\mathbb C^n,$ writing
$z=\rho \,\omega,$ where $\rho>0$ and $\omega \in S^{2n-1},$ we can expand the function
$f$ in terms of spherical harmonics as
\begin{equation}\label{Bexp4}
f (\rho\omega) = \sum_{p,q\geq0} \sum_{j=1}^{d(p,q)} a_j^{p,q}(\rho)
Y_j^{p,q}(\omega).
\end{equation}
The functions $ a_j^{p,q} $ are called the spherical harmonic
coefficients of the function $f$. The $(p,q)^{th}$ spherical
harmonic projection, $\Pi_{p,q}(f)$ of the function $f$ is then
defined as
\begin{equation}\label{Bexp5}
\Pi_{p,q}(f)(\rho,\omega)= \sum_{j=1}^{d(p,q)} a_j^{p,q}(\rho)
Y_j^{p,q}(\omega).
\end{equation}
We will replace the spherical harmonic $Y_j^{p,q}(\omega)$ on the
sphere by the solid harmonic
$P_j^{p,q}(z)=|z|^{p+q}Y_j^{p,q}(\frac{z}{|z|})$ on $\mathbb C^n$
and accordingly for a function $f.$ Define $\tilde{a}_j^{p,q}(\rho)=
\rho^{-(p+q)} a_j^{p,q}(\rho),$ where $a_j^{p,q}$ are defined by
Equation \ref{Bexp4}. We shall continue to call the functions
$\tilde{a}_j^{p,q}$ the spherical harmonic coefficients of $f.$

\bigskip

In the proof of Theorem \ref{Cth2}, we also need an expansion of functions
on $\mathbb C^n$ in terms of Laguerre functions $\varphi_k^{n-1}$'s. Let
$f\in L^2(\mathbb C^n).$ Then the special Hermite expansion for $f$ is
given by
\begin{equation}\label{Cexp16}
f(z)=(2\pi)^{-n}\sum_{k=0}^\infty f\times\varphi_k^{n-1}(z).
\end{equation}
For radial functions, this expansion further simplifies as can be seen from the following lemma.
\begin{lemma} \label{lemma1C}\emph{\cite{T}}
Let  $f$ be a radial function in $L^2(\mathbb C^n)$. Then
\[f=\sum_{k=0}^\infty B^n_k\left\langle f,\varphi^{n-1}_k\right\rangle\varphi^{n-1}_k,
~\text{where}~B^n_k=\frac{k!(n-1)!}{(n+k-1)!}.\]
\end{lemma}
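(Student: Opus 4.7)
The plan is to specialize the special Hermite expansion (\ref{Cexp16}) to the radial setting. The key preliminary observation is that if $f$ and $g$ are radial on $\mathbb{C}^n$, then $f \times g$ is again radial; this follows because $U(n)$ preserves both Lebesgue measure and the symplectic form $\mathrm{Im}(z \cdot \bar{w})$ that enters the twisted convolution. Applied to (\ref{Cexp16}), each summand $f \times \varphi_k^{n-1}$ is therefore radial whenever $f$ is.

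Next I would invoke the standard fact that twisted convolution with $\varphi_k^{n-1}$ is, up to the constant $(2\pi)^n$, the orthogonal projection onto the $k$-th eigenspace of the special Hermite operator, and that the radial part of this eigenspace is one-dimensional, spanned by $\varphi_k^{n-1}$ itself (see \cite{T}). Consequently
\[f \times \varphi_k^{n-1}(z) = c_k\, \varphi_k^{n-1}(z)\]
for a scalar $c_k = c_k(f)$. To pin down $c_k$, evaluate at $z = 0$: the twisted phase is $1$ there, and radiality of $f$ and $\varphi_k^{n-1}$ lets us replace $f(-w)$ by $f(w)$, so
\[c_k\, \varphi_k^{n-1}(0) = \int_{\mathbb{C}^n} f(w)\, \varphi_k^{n-1}(w)\, dw = \langle f, \varphi_k^{n-1}\rangle.\]
Since $\varphi_k^{n-1}(0) = L_k^{n-1}(0) = \binom{n+k-1}{k} = 1/B_k^n$, this forces $c_k = B_k^n \langle f, \varphi_k^{n-1}\rangle$. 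Substituting back into (\ref{Cexp16}) yields the stated expansion, with convergence in $L^2(\mathbb{C}^n)$.

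The only nontrivial input is the one-dimensionality of the radial part of the $k$-th special-Hermite eigenspace, which is classical. An equivalent route, avoiding the projection identification, is to phrase the lemma as Parseval's identity for the orthonormal system $\{\sqrt{B_k^n}\,\varphi_k^{n-1}\}$ in the radial subspace of $L^2(\mathbb{C}^n)$; the normalization $\|\varphi_k^{n-1}\|^2 = (2\pi)^n/B_k^n$ then reduces, via the substitution $t = |z|^2/2$ and integration in polar coordinates on $\mathbb{C}^n$, to the classical Laguerre orthogonality $\int_0^\infty L_k^{n-1}(t)^2 e^{-t} t^{n-1}\, dt = (n+k-1)!/k!$. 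Either route reduces the lemma to standard facts; the main bookkeeping concerns the $(2\pi)^{-n}$ factor from (\ref{Cexp16}), which must be absorbed into the normalization of $\langle \cdot, \cdot \rangle$ to match the stated identity.
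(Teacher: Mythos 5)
The paper offers no proof of this lemma at all --- it is quoted from Thangavelu's book \cite{T} --- so there is no argument of the author's to compare yours against; what you have written is a correct reconstruction of the standard derivation. Your first route (radiality of $f\times\varphi_k^{n-1}$, one-dimensionality of the radial part of the $k$-th special Hermite eigenspace, evaluation at $z=0$ using $L_k^{n-1}(0)=\binom{n+k-1}{k}=1/B_k^n$) is exactly how the identity is usually extracted from the special Hermite expansion \eqref{Cexp16}; the one genuinely nontrivial input, that an $L^2$ radial eigenfunction with eigenvalue $2k+n$ is a multiple of $\varphi_k^{n-1}$, is classical (it reduces to the Laguerre ODE having a one-dimensional $L^2$ solution space) and you are right to cite it rather than reprove it. Your second route, via Parseval for the system $\{\sqrt{B_k^n}\,\varphi_k^{n-1}\}$, is equally valid and arguably more self-contained, since it needs only the orthogonality and completeness of the Laguerre functions in $L^2(\mathbb R_+,t^{n-1}e^{-t}\,dt)$ rather than the full special Hermite expansion. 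The normalization point you flag is real and worth stating plainly: with the unnormalized Lebesgue inner product on $\mathbb C^n$ one has $\|\varphi_k^{n-1}\|^2=(2\pi)^n/B_k^n$, so the expansion comes out as $f=(2\pi)^{-n}\sum_k B_k^n\langle f,\varphi_k^{n-1}\rangle\varphi_k^{n-1}$, and the identity as printed in the lemma presupposes the normalized measure $(2\pi)^{-n}\,dz$ in the inner product. That is a convention mismatch in the statement, not a gap in your argument, and it is immaterial to how the lemma is used later in the paper, where only the vanishing or non-vanishing of the coefficients $\langle f,\varphi_k^{n-1}\rangle$ matters.
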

We would also need the following Hecke-Bochner identities for
the spectral projections $f\times\varphi^{n-1}_k,$  (see \cite{T}, p.70).

\begin{lemma} \label{lemma4C}\emph{\cite{T}}
Let ~$\tilde aP\in L^2(\mathbb C^n),$ where $\tilde a$ is radial and $P\in H_{p,q}$.
Then
\[(\tilde aP)\times\varphi_k^{n-1}(z)=(2\pi)^{-n}P(z)~\tilde a\times\varphi_{k-p}^{n+p+q-1}(z),\]
if ~$k\geq p$ and ~$0$ otherwise. The convolution in the right hand side is on
the space $~\mathbb C^{n+p+q}.$
\end{lemma}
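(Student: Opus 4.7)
The plan is to prove this twisted Hecke--Bochner identity by combining the generating function for Laguerre functions with a Gaussian computation that exploits the bigraded harmonicity of $P$. This is a twisted-convolution analog of the classical Euclidean Hecke--Bochner theorem, where the polynomial weight $P\in H_{p,q}$ has the effect of raising the ambient (complex) dimension from $n$ to $n+p+q$.

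First, I would invoke the Hille--Hardy (Mehler-type) formula
\[
\sum_{k=0}^{\infty} r^k \varphi_k^{n-1}(w) \;=\; (1-r)^{-n}\, \exp\!\left(-\tfrac{1+r}{4(1-r)}|w|^2\right), \qquad |r|<1,
\]
so that, after multiplying the desired identity by $r^k$ and summing, the statement reduces to computing $(\tilde a P)\times G_\alpha(z)$ where $G_\alpha(w)=e^{-\alpha|w|^2}$ for a suitable $\alpha$. If I can show that this twisted convolution equals $P(z)$ times a Gaussian corresponding (again by Hille--Hardy, but now in dimension $n+p+q$) to a generating series for $\varphi_k^{n+p+q-1}$, matching coefficients of $r^k$ gives the claim, and the factor $r^p$ naturally pulled out by $P$ forces both the index shift $k\mapsto k-p$ and vanishing for $k<p$.

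Second, for the Gaussian computation, I expand $|z-w|^2$ and combine with the phase $e^{\frac{i}{2}\mathrm{Im}(z\cdot\bar w)}$; the exponent becomes quadratic in $w$ with an imaginary--linear part depending on $z$. Completing the square re-centers the Gaussian at a complex shift $\zeta=\zeta(z)$, reducing the integral to
\[
\int_{\mathbb C^n} \tilde a(w)\, P(w)\, e^{-\beta|w-\zeta|^2}\, dw
\]
up to an overall Gaussian factor in $z$. Writing $P(w)=P((w-\zeta)+\zeta)$ and using that $P\in H_{p,q}$ is bigraded harmonic of type $(p,q)$, the Euclidean Hecke--Bochner argument (applied to the ordinary Gaussian centered at $\zeta$) shows that this integral equals $P(\zeta)$ times a radial function of $|\zeta|$, interpretable as a radial integral on $\mathbb C^{n+p+q}$. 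Reassembling the Gaussian factor in $z$ converts $P(\zeta)$ back into $P(z)$ (up to scalars), which is precisely where the dimension shift $n\mapsto n+p+q$ enters.

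The main obstacle is this second step: carefully tracking the interplay between the twisted phase, the complex shift $\zeta(z)$, and the bigraded polynomial $P$ to pin down the constant $(2\pi)^{-n}$ and to verify that the resulting radial object is exactly the twisted convolution $\tilde a\times\varphi_{k-p}^{n+p+q-1}$ in the higher dimension. Once the Gaussian identity is established with the correct constants, the proof concludes by termwise comparison of the two Hille--Hardy expansions, which automatically delivers the index shift and the vanishing for $k<p$.
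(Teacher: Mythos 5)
The paper does not prove this lemma at all: it is quoted verbatim from Thangavelu's book (\cite{T}, p.~70), so there is no in-paper argument to compare yours against. Your generating-function strategy (sum against $r^k$, use the Hille--Hardy formula to reduce to a twisted convolution with a Gaussian, then match coefficients) is a recognized and workable route to Hecke--Bochner identities in the special Hermite setting, and your observation that the factor $r^p$ pulled out by $P$ is what forces both the shift $k\mapsto k-p$ and the vanishing for $k<p$ is exactly right as a matter of bookkeeping.

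However, as written the proposal has a genuine gap, and it sits precisely at the step you yourself flag as ``the main obstacle,'' which is in fact the entire content of the lemma. When you complete the square in the exponent $-\alpha|z-w|^2+\tfrac{i}{2}\mathrm{Im}(z\cdot\bar w)$, the cross terms are $(\alpha+\tfrac14)\,z\cdot\bar w+(\alpha-\tfrac14)\,\bar z\cdot w$, so the ``center'' $\zeta(z)$ is not a point of $\mathbb C^n$: the holomorphic and antiholomorphic centers are $\frac{\alpha+1/4}{\alpha}z$ and $\frac{\alpha-1/4}{\alpha}z$ respectively, and they are not complex conjugates of one another. Consequently you cannot literally ``re-center the Gaussian at $\zeta$'' and invoke the real-variable Funk--Hecke/mean-value argument; you must analytically continue the Euclidean Hecke--Bochner identity in the two center variables separately and use the bigraded structure of $P$ to see that the $z^\alpha$ part picks up $(\alpha-\tfrac14)^p$ while the $\bar z^\beta$ part picks up $(\alpha+\tfrac14)^q$. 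Since $r=\frac{4\alpha-1}{4\alpha+1}$ and $1-r=\frac{2}{4\alpha+1}$, it is exactly this asymmetry that converts the prefactor into $r^p(1-r)^{-(p+q)}$ rather than $r^{p+q}$ or $r^q$. A version of your argument that treats $\zeta$ as a single real center cannot produce this asymmetry and hence cannot distinguish $(\tilde aP)\times\varphi_k^{n-1}$ (shift $k-p$) from $\varphi_k^{n-1}\times(\tilde aP)$ (shift $k-q$; compare Lemma \ref{lemma3C} of the paper), so it would necessarily give the wrong index shift in one of the two cases. Until that Gaussian identity, with its constants and its non-conjugate centers, is actually carried out, the proof is not complete.
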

Using the Hecke-Bochner identities, a weighted functional equation for spherical function
$\varphi_k^{n-1}$ has been proved in \cite{T}, p.98.
\begin{lemma}\label{lemma3C}\emph{\cite{T}}
For $z\in\mathbb C^n,$ let $P\in H_{p,q}$ and $d\nu_r=Pd\mu_r$.
Then
\[\varphi_k^{n-1}\times\nu_r(z)=
(2\pi)^{-n}C(n,p,q)r^{2(p+q)}\varphi_{k-q}^{n+p+q-1}(r)P(z)\varphi_{k-q}^{n+p+q-1}(z),\]
if $k\geq q$ and ~$0$ otherwise.
\end{lemma}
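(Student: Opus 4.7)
The plan is to derive this formula by combining the Hecke--Bochner identity (Lemma \ref{lemma4C}) with the Laguerre eigenfunction relation (\ref{Cexp22}), applied in a dimensionally shifted space. The signed measure $d\nu_r = P\,d\mu_r$ is the product of the radial surface measure $\mu_r$ with the solid harmonic $P \in H_{p,q}$, so it is of the form ``radial times harmonic''; approximating $\nu_r$ by functions $\tilde{a}_\epsilon(|w|)P(w)$ with $\tilde{a}_\epsilon$ a smooth bump concentrated near $|w|=r$ brings us inside the hypothesis of Lemma \ref{lemma4C}, and passing to the limit $\epsilon \to 0$ will recover $\nu_r$.

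The second step pushes the polynomial $P$ outside the twisted convolution, replacing it by a radial twisted convolution in $\mathbb{C}^{n+p+q}$. Care is needed because twisted convolution is non-commutative: Lemma \ref{lemma4C} governs $(\tilde a P) \times \varphi_k^{n-1}$ and yields the shift $k \to k-p$, whereas we need $\varphi_k^{n-1} \times (\tilde a P)$, which instead produces $k \to k-q$. This is why the constraint is $k \geq q$ and why $\varphi_{k-q}^{n+p+q-1}$ appears rather than $\varphi_{k-p}^{n+p+q-1}$; the swap $p \leftrightarrow q$ reflects the fact that complex conjugation intertwines $H_{p,q}$ with $H_{q,p}$ and that reversing the order of twisted convolution is implemented, up to a sign in the phase, by conjugation. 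After this step one arrives at an identity of the form
\[
\varphi_k^{n-1} \times \nu_r(z) \;=\; (2\pi)^{-n}\, c \cdot P(z)\, \bigl(\mu_r \times \varphi_{k-q}^{n+p+q-1}\bigr)(z),
\]
where the right-hand convolution is interpreted on $\mathbb{C}^{n+p+q}$, and $c$ collects the Jacobian relating surface measures on $S^{2n-1}$ and $S^{2(n+p+q)-1}$ together with the homogeneity $P(rw)=r^{p+q}P(w)$. These two independent factors of $r^{p+q}$ combine to give the factor $r^{2(p+q)}$ in the statement.

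The final step applies (\ref{Cexp22}) in $\mathbb{C}^{n+p+q}$, yielding
\[
\mu_r \times \varphi_{k-q}^{n+p+q-1}(z) \;=\; B^{n+p+q}_{k-q}\, \varphi_{k-q}^{n+p+q-1}(r)\, \varphi_{k-q}^{n+p+q-1}(z),
\]
after which the product $B^{n+p+q}_{k-q}\cdot c$ may be absorbed into the single constant $C(n,p,q)$ (with implicit $k$-dependence) announced in the statement. For $k < q$, the shift in Lemma \ref{lemma4C} forces the intermediate convolution to vanish, giving the zero case. The principal obstacle is the bookkeeping in the middle step: tracking the non-commutativity and all the normalizations carefully enough that $P(z)$ emerges cleanly outside with exactly the right power of $r$. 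A useful cross-check is the trivial case $P \equiv 1$ (so $p=q=0$), where the identity must collapse back to (\ref{Cexp22}).
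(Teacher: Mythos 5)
The paper does not prove this lemma at all---it is quoted from \cite{T}, where it is derived exactly along the lines you propose (Hecke--Bochner, Lemma \ref{lemma4C}, with the order of twisted convolution reversed via $\overline{f\times g}=\bar g\times\bar f$, which swaps $H_{p,q}$ with $H_{q,p}$ and accounts for the $k-q$ shift and the condition $k\geq q$, followed by the radial functional equation (\ref{Cexp22}) applied in $\mathbb C^{n+p+q}$), so your proposal is correct and follows essentially the same route as the cited source. Two small bookkeeping remarks: the full factor $r^{2(p+q)}$ already comes from the ratio of radial Jacobians $r^{2(n+p+q)-1}/r^{2n-1}$ in the limiting argument (not from two separate factors of $r^{p+q}$), and, as you rightly note, the constant written as $C(n,p,q)$ in the statement in fact carries a $k$-dependence through $B^{n+p+q}_{k-q}$, which is harmless for the way the lemma is used later since only its nonvanishing matters.
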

\begin{remark}\label{Crk1}
From Lemma \ref{lemma3C}, it can  be seen that Theorem \ref{Cth2} does not hold for
$p=\infty.$ For instance, take $P\in H_{0,1}$ and let
$d\nu=Pd\mu_r$. Then $\varphi_0^{n-1}\times\nu(z)=0$, where
$\varphi_0^{n-1}(z)=e^{-\frac{1}{4}|z|^2}.$
\end{remark}

\section{Injectivity of the Weighted Twisted Spherical Means}\label{section3}
In this section, we prove that the spheres are sets of injectivity for the
weighted twisted spherical means on $\mathbb C^n$.
Let
\begin{equation}\label{Cexp2}
f_{lm}(z)=d(s,t)\int_{U(n)}f(\sigma^{-1}z)t_{lm}^{s,t}(\sigma)d\sigma
\end{equation}
for $1 \leq l,m\leq d(s,t).$
\begin{lemma} \label{lemma7C}
Let $f$ be a continuous function on $\mathbb C^n$. Suppose $f\times\nu_{r,j}(z)=0$
on sphere $S_R(o),$ for all $j,~1\leq j\leq d(p,q)$ and for all $r>0$. Then
$f_{lm}\times\nu_{r,j}(z)=0,$ on $S_R(o)$, whenever $1\leq l,m\leq d(s,t),~
1\leq j\leq d(p,q)$ and $r>0$.
\end{lemma}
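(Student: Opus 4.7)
The plan is to exploit the $U(n)$-equivariance of the twisted convolution together with the fact that the family $\{P_{st}^j: 1 \le j \le d(s,t)\}$ spans the $K$-invariant space $H_{s,t}$, and that the sphere $S_R(o)$ is $U(n)$-invariant. Since $f_{lm}$ is obtained by averaging $f$ against a matrix coefficient of $\pi_{s,t}$, applying a weighted twisted spherical mean should, after a change of variables, reduce to a linear combination of the vanishing expressions $(f\times\nu_{r,k})(\sigma^{-1}z)$.

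Concretely, I would first unfold the definitions:
\[
f_{lm}\times\nu_{r,j}(z)=d(s,t)\int_{U(n)}\!\!\int_{S_r(o)} f(\sigma^{-1}(z-w))\,e^{\frac{i}{2}\text{Im}(z\cdot\bar w)}P_{st}^j(w)\,d\mu_r(w)\,t_{lm}^{s,t}(\sigma)\,d\sigma,
\]
interchange the two integrations (permissible since $f$ is continuous and the integrals are over a compact set and a compact sphere), and then perform the change of variable $w\mapsto\sigma w$ in the inner integral. Two facts make this substitution clean: $d\mu_r$ is $U(n)$-invariant, and $\text{Im}(z\cdot\overline{\sigma w})=\text{Im}(\sigma^{-1}z\cdot\bar w)$ because $\sigma\in U(n)$ preserves the Hermitian inner product.

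The next step is to expand $P_{st}^j(\sigma w)$ in the $U(n)$-representation on $H_{s,t}$: one has
\[
P_{st}^j(\sigma w)=\sum_{k=1}^{d(s,t)} c_{kj}(\sigma)\,P_{st}^k(w),
\]
for coefficients $c_{kj}(\sigma)$ which are (up to transpose/inverse conventions) matrix coefficients of $\pi_{s,t}$. Substituting back gives
\[
f_{lm}\times\nu_{r,j}(z)=d(s,t)\sum_{k=1}^{d(s,t)}\int_{U(n)} t_{lm}^{s,t}(\sigma)\,c_{kj}(\sigma)\,(f\times\nu_{r,k})(\sigma^{-1}z)\,d\sigma .
\]
Now the hypothesis kicks in: for $z\in S_R(o)$ we have $\sigma^{-1}z\in S_R(o)$ for every $\sigma\in U(n)$, so each term $(f\times\nu_{r,k})(\sigma^{-1}z)$ in the $K$-integrand vanishes, and the lemma follows.

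The only mildly delicate point will be bookkeeping of the representation-theoretic formula for $P_{st}^j(\sigma w)$ (that is, whether the transition matrix should be $\pi_{s,t}(\sigma)$ or $\pi_{s,t}(\sigma^{-1})$, and the indexing of the matrix coefficients), but this is cosmetic: any correct expansion inside the finite-dimensional $K$-module $H_{s,t}$ produces a finite linear combination of $(f\times\nu_{r,k})(\sigma^{-1}z)$, each of which vanishes on $S_R(o)$ by hypothesis. No decay or integrability assumption is needed beyond continuity of $f$, so the proof is essentially a covariance computation.
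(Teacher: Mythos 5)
Your argument is correct and is essentially the paper's own proof: unfold the definition of $f_{lm}\times\nu_{r,j}$, change variables $w\mapsto\sigma w$ using the $U(n)$-invariance of $d\mu_r$ and of the Hermitian form, expand $P^j(\sigma w)$ inside the finite-dimensional $K$-invariant space of harmonics, and conclude since $\sigma^{-1}z\in S_R(o)$ makes every resulting term $(f\times\nu_{r,k})(\sigma^{-1}z)$ vanish by hypothesis. The paper's version is just a terser rendering of the same covariance computation, so no further comparison is needed.
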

\begin{proof}
We have
\[f_{lm}\times\nu_{r,j}(z)=d(s,t)\int_{S_r(o)}\int_{U(n)}
f(\sigma^{-1}(z-w))e^{\frac{i}{2}\text{Im}(z.\bar{w})}t_{lm}^{s,t}(\sigma)P_{s,t}^j(w)d\sigma d\mu_r(w).\]
Since the space $H_{p,q}$ is $U(n)$-invariant, therefore the function
$P_{s,t}^j(\sigma^{-1}w)$ is linear combination of polynomials in $H_{p,q}$.
By hypothesis, it follows that
\[\int_{U(n)}t_{lm}^{s,t}(\sigma)
\int_{S_r(o)}f(\sigma^{-1}z-w)e^{\frac{i}{2}\text{Im}(\sigma^{-1}z.\bar{w})}P_1^j(\sigma w)d\mu_r(w)d\sigma=0.\]
\end{proof}

\begin{remark}\label{Crk2}
In view of Lemma \ref{lemma7C}, it is enough to work with the function of
type $f(z)=\tilde a(|z|)P_{s,t}(z)$ and measure $d\nu_r=z_1^p\bar z_2^qd\mu_r$
for the proof of Theorem \ref{Cth2}. We therefore drop the index $j$ and write
$P_1(z)=z_1^p\bar z_2^q$ and $d\nu_r=P_1d\mu_r.$
\end{remark}
We need the following result of Filaseta and Lam \cite{FL}, about the
irreducibility of Laguerre polynomials. Define the
Laguerre polynomials by
\[L^\alpha_k(x)=\sum_{i=0}^k(-1)^i\binom{\alpha+k}{k-i}\frac{x^i}{i!},\]
$\text{ where } k\in \mathbb Z_{+} \text{ and } \alpha\in\mathbb C.$

\begin{theorem}\emph{\cite{FL}}\label{Cth10} Let $\alpha$ be a rational number,
which is not a negative integer. Then for all but finitely many $k\in\mathbb Z_+$,
the polynomial $L_k^\alpha(x)$ is irreducible over the rationals.
\end{theorem}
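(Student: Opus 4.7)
The plan is to extend Schur's classical Newton-polygon proof of the irreducibility of $L_k^0(x)$ to the case of rational parameter $\alpha$. Write $\alpha = a/b$ with $b \geq 1$ and $\gcd(a,b) = 1$, and observe that the coefficient of $x^i$ in $L_k^\alpha(x)$ equals $(-1)^i \prod_{j=i+1}^{k}(\alpha+j) / (i!\,(k-i)!)$. Clearing denominators produces a polynomial $F_k(x) = k!\,b^k L_k^\alpha(x) \in \mathbb{Z}[x]$, and any factorization of $L_k^\alpha$ over $\mathbb{Q}$ lifts, up to rescaling, to a factorization of $F_k$ in $\mathbb{Z}[x]$ by Gauss's lemma.

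Suppose toward contradiction that $F_k = g h$ in $\mathbb{Z}[x]$ with $1 \leq \deg g = m \leq k/2$. The strategy is to locate a prime $p$ satisfying (i) $p \nmid b$, (ii) $p > m$, and (iii) $p$ divides exactly one of the integers $a + b(k-m+1), \dots, a + bk$, and does so to the first power. For such a prime, a direct $p$-adic valuation computation shows that the Newton polygon of $F_k$ at $p$ consists of a short initial segment over $[0, k-m]$ followed by a single dominant segment of negative slope of denominator $> m$ over the remaining indices. The standard theory of Newton polygons then forces every irreducible factor of $F_k$ over $\mathbb{Q}_p$ to have degree either $\leq k-m$ or divisible by the denominator of that slope. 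This excludes the possibility $\deg g = m \leq k/2$, yielding the contradiction. Hence the only factorizations of $L_k^\alpha$ over $\mathbb{Q}$ are trivial.

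The principal obstacle is the prime-existence input: one must guarantee, for all sufficiently large $k$ and uniformly in the unknown degree $m \leq k/2$, the existence of a prime $p$ with properties (i)--(iii). I would establish this via a Sylvester--Erdős style estimate on the prime divisors of the shifted factorial $\prod_{j=1}^{k}(a+bj)$. Concretely, if every prime dividing this product to the first power were at most $m$, then comparing $\log \prod_{j=1}^{k}(a+bj) \sim k\log(bk)$ against the smooth-part contribution controlled by $\sum_{p \leq m} \lfloor \log_p(bk) \rfloor \log p$ together with Mertens-type bounds produces a contradiction once $k$ is large compared to $|a|$ and $b$.

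A secondary difficulty is handling primes $p$ dividing the denominator $b$, where the naive valuation formula breaks down; these are absorbed into the constant $b^k$ used in defining $F_k$, which does not affect the Newton polygon at primes $p \nmid b$. The finitely many exceptional $k$ named in the statement are precisely the small values where the prime-existence lemma can genuinely fail, together with the degenerate cases where $\alpha + j$ happens to vanish for some integer $j$ with $1 \leq j \leq k$, which is excluded by the hypothesis that $\alpha$ is not a negative integer.
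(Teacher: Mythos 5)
This theorem is not proved in the paper at all: it is quoted verbatim from Filaseta and Lam \cite{FL}, so the only meaningful comparison is with their original argument, whose overall strategy (a Schur-style Newton polygon argument plus a prime-existence input) your outline correctly identifies. As a proof, however, your proposal has two genuine gaps, and they sit exactly at the two load-bearing steps. First, the Newton-polygon step: the conclusion you state --- every $\mathbb{Q}_p$-irreducible factor of $F_k$ has degree either at most $k-m$ or divisible by the denominator of the dominant slope --- does \emph{not} exclude a factor of degree $m$, since $m\le k/2\le k-m$; the contradiction you announce simply does not follow from that dichotomy. What is actually needed is the finer lemma of Dumas--Filaseta type: the degree of any factor of $F_k$ in $\mathbb{Z}[x]$ is a sum of horizontal lengths of segments between lattice points on the Newton polygon, so one must show that \emph{all} edges have slope of absolute value less than $1/m$, which rests on the valuation bound $v_p\bigl(\prod_{i=j+1}^{k}(a+bi)\bigr)\le (k-j)/(p-1)+O(\log_p(bk))$ together with $p$ large compared to $m$, $|a|$ and $b$. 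That estimate is precisely the ``direct $p$-adic valuation computation'' you wave at, and it is also where the restriction to all but finitely many $k$ genuinely enters; it cannot be treated as routine.

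Second, your prime-existence argument is aimed at the wrong product. The Newton-polygon setup requires $p\mid c_j$ for every $j\le k-m$, hence $p$ must divide one of the \emph{last} $m$ terms $a+b(k-m+1),\dots,a+bk$; but your Sylvester--Erd\H{o}s style estimate is carried out for the full product $\prod_{j=1}^{k}(a+bj)$, and a large prime divisor of that product need not divide any of the top $m$ terms. What is required is a Sylvester--Erd\H{o}s type theorem for $m$ consecutive terms of the arithmetic progression $a+bj$, uniformly in $1\le m\le k/2$ (this is the input Filaseta and Lam actually invoke); note also that for $p>bm$ the requirement that $p$ divide ``exactly one'' term is automatic, while ``to the first power'' is neither needed nor easy to guarantee, so condition (iii) should be dropped in favour of the correct slope bound. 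If you state and prove (or correctly cite) the Newton-polygon lemma with the slope condition $<1/m$ and the arithmetic-progression analogue of Sylvester--Erd\H{o}s applied to the top $m$ terms, your sketch becomes the Filaseta--Lam proof; as written, both decisive steps are asserted rather than established.
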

Using Theorem \ref{Cth10}, we obtain the following corollary about the zeros of Laguerre
polynomials.
\begin{corollary}\label{cor1}
Let $k\in\mathbb Z_{+}.$ Then for all but finitely many $k$, the Laguerre polynomials
$L^{n-1}_k(x)$'s have distinct zeros over the reals.
\end{corollary}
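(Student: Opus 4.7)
The plan is to deduce this directly from the Filaseta--Lam irreducibility theorem stated just above. The parameter $\alpha = n-1$ is a non-negative integer, hence a rational number that is not a negative integer, so Theorem~\ref{Cth10} applies: there exists a finite exceptional set $E \subset \mathbb Z_+$ such that $L_k^{n-1}(x)$ is irreducible over $\mathbb Q$ for every $k \in \mathbb Z_+ \setminus E$. I read the statement of the corollary as asserting two things: (i) within each such $L_k^{n-1}$ the real zeros are simple, and (ii) for distinct $k_1, k_2 \in \mathbb Z_+ \setminus E$, the zeros of $L_{k_1}^{n-1}$ and $L_{k_2}^{n-1}$ do not coincide. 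Both will follow from irreducibility together with a short minimal-polynomial argument.

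For (i), I would recall that in characteristic zero an irreducible polynomial is automatically separable: if $L_k^{n-1}$ shared a root with its derivative, then $\gcd(L_k^{n-1}, (L_k^{n-1})')$ would be a nontrivial proper factor of $L_k^{n-1}$, contradicting irreducibility. Combined with the classical fact that Laguerre polynomials (being orthogonal on $(0,\infty)$ with respect to a positive weight) have only real simple zeros lying in $(0,\infty)$, we conclude that for $k \notin E$ the zeros of $L_k^{n-1}$ are distinct real numbers.

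For (ii), suppose $k_1, k_2 \in \mathbb Z_+ \setminus E$ with $k_1 \ne k_2$, and assume some real number $\xi$ is a common zero of $L_{k_1}^{n-1}$ and $L_{k_2}^{n-1}$. Let $m_\xi(x) \in \mathbb Q[x]$ denote the minimal polynomial of $\xi$ over $\mathbb Q$. Then $m_\xi$ divides both $L_{k_1}^{n-1}$ and $L_{k_2}^{n-1}$ in $\mathbb Q[x]$; but since both polynomials are irreducible over $\mathbb Q$, each is a rational scalar multiple of $m_\xi$. This forces $\deg L_{k_1}^{n-1} = \deg m_\xi = \deg L_{k_2}^{n-1}$, i.e.\ $k_1 = k_2$, contradicting our choice. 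Hence no such common zero can exist.

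There is no substantive obstacle: the corollary is essentially a one-line consequence of Theorem~\ref{Cth10}. The only subtle point is to interpret ``distinct zeros'' in the correct way needed for the later application to Theorem~\ref{Cth2}, namely that on a cofinite subset of indices the zero sets $\{x : L_k^{n-1}(x) = 0\}$ are pairwise disjoint and each consists of simple real points. That interpretation is exactly what the minimal-polynomial/degree argument above delivers.
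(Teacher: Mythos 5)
Your proof is correct and takes essentially the same route as the paper: both deduce from the Filaseta--Lam theorem that, beyond a finite exceptional set of indices, two Laguerre polynomials $L_{k_1}^{n-1}$ and $L_{k_2}^{n-1}$ with $k_1\neq k_2$ are non-associate irreducibles over $\mathbb Q$ and therefore have no common real zero. The paper expresses the coprimality through a Bezout identity $P_1L_{k_1}^{n-1}+P_2L_{k_2}^{n-1}=1$ instead of your minimal-polynomial/degree argument, but the content is the same, and your extra observation on simplicity of the zeros, though not needed for the application in Proposition~\ref{Cprop6}, is harmless.
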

\begin{proof}
 By Theorem \ref{Cth10}, there exists $k_o\in\mathbb Z_+$ such that $L_k^{n-1}$'s are
irreducible over $\mathbb Q$ whenever  $k\geq k_o.$ Therefore, we can find polynomials
$P_1, P_2\in\mathbb Q[x]$ such that $P_1L_{k_1}^{n-1}+ P_2L_{k_2}^{n-1}=1,$ over $\mathbb Q$
with $k_1,k_2\geq k_o.$ Since this identity continue to hold on $\mathbb R,$ it follows that
$L_{k_1}^{n-1}$ and $L_{k_2}^{n-1}$ have no common zero over $\mathbb R.$
\end{proof}
In the proof of Theorem \ref{Cth2}, we use the following right invariant differential operators
for twisted convolution:
\[\tilde A_j=\frac{\partial}{\partial z_j}+\frac{1}{4}\bar{z}_j ~\mbox{and}~\tilde A_j^*=
\frac{\partial}{\partial\bar{z}_j}-\frac{1}{4}z_j;~j=1,2,\ldots,n.\]
In addition, we have the left invariant differential operators
\[\tilde Z_j=\frac{\partial}{\partial z_j}-\frac{1}{4}\bar{z}_j~\mbox{and}
~\tilde Z_j^*=\frac{\partial}{\partial\bar{z}_j}+\frac{1}{4}z_j;~j=1,2,\ldots,n\] for twisted
convolution.
Let $P$ be a non-commutative homogeneous harmonic polynomial on $\mathbb C^n$
with expression
\[P(z)=\sum_{|\alpha|=p}\sum_{|\beta|=q}c_{\alpha\beta}z^\alpha\bar{z}^\beta,\]
Using the result of Geller \cite{Ge}, about Weyl correspondence of the
spherical harmonics, the operator analogue of $P(z),$ accordingly
the left and right invariant vector fields can be expressed as
\[P(\widetilde Z)=\sum_{|\alpha|=p}\sum_{|\beta|=q}c_{\alpha\beta}{\widetilde {Z^*}}^\alpha\widetilde Z^\beta
\text{ and }P(\tilde A)=\sum_{|\alpha|=p}\sum_{|\beta|=q}c_{\alpha\beta}{\tilde {A^*}}^\alpha\tilde A^\beta.\]
In order to prove Theorem \ref{Cth2}, We need to prove the following lemma.
\begin{lemma}\label{lemma5C}
For  $P_1(z)=z_1^p\bar z_2^q\in H_{p,q}$ we have
\begin{equation}\label{Cexp3}
P_1(\tilde A)\varphi_k^{n-1}(z)=\bar P_1(\widetilde Z)\varphi_k^{n-1}(z)=
(-2)^{-p-q}P_1(z)\varphi_{k-q}^{n+p+q-1}(z),
\end{equation}
if ~$k\geq q$ and ~$0$ otherwise.
\end{lemma}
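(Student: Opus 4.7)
The plan is as follows. Because $P_1(z) = z_1^p \bar z_2^q$ is a monomial in two disjoint coordinates, the iterated operators $P_1(\tilde A) = (\tilde A_1^*)^p \tilde A_2^q$ and $\bar P_1(\widetilde Z) = (\widetilde Z_2^*)^q \widetilde Z_1^p$ each factor as a product of two commuting blocks. So the computation reduces to understanding the elementary action of one first-order operator at a time on a generalized Laguerre function. I would carry out three steps: establish one-step recursions for the four operators on $\tilde\psi_k^\alpha(z) := L_k^\alpha(|z|^2/2) e^{-|z|^2/4}$; iterate those recursions using that each operator commutes with the variables it does not differentiate; and collect the combinatorial factor $(-2)^{-p-q}$ together with the correct shifts of Laguerre indices.

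The first step uses a direct product-rule computation with the Laguerre identities
\[(L_k^\alpha)'(x) = -L_{k-1}^{\alpha+1}(x) \quad\text{and}\quad L_{k-1}^{\alpha+1}(x) + L_k^\alpha(x) = L_k^{\alpha+1}(x).\]
After the absorption identity merges the two terms produced by differentiating the Laguerre polynomial and the Gaussian, I obtain
\[\tilde A_j \tilde\psi_k^\alpha = -\frac{\bar z_j}{2}\tilde\psi_{k-1}^{\alpha+1}, \qquad \tilde A_j^* \tilde\psi_k^\alpha = -\frac{z_j}{2}\tilde\psi_k^{\alpha+1},\]
and analogously $\widetilde Z_j \tilde\psi_k^\alpha = -\frac{\bar z_j}{2}\tilde\psi_k^{\alpha+1}$, $\widetilde Z_j^* \tilde\psi_k^\alpha = -\frac{z_j}{2}\tilde\psi_{k-1}^{\alpha+1}$. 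Every operator raises $\alpha$ by one; $\tilde A_j$ and $\widetilde Z_j^*$ additionally lower $k$ by one, while $\tilde A_j^*$ and $\widetilde Z_j$ leave $k$ fixed.

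The second step is iteration. Since $\partial_{z_2}\bar z_2 = 0$, the operator $\tilde A_2$ commutes with multiplication by $\bar z_2$, so $\tilde A_2^q \tilde\psi_k^{n-1} = (-\bar z_2/2)^q \tilde\psi_{k-q}^{n+q-1}$. Applying $(\tilde A_1^*)^p$ next, which commutes with both $z_1$ and $\bar z_2$ since the two coordinate pairs are disjoint, yields $(-z_1/2)^p(-\bar z_2/2)^q \tilde\psi_{k-q}^{n+p+q-1}$, i.e.\ $(-2)^{-p-q} P_1(z)\varphi_{k-q}^{n+p+q-1}(z)$. The vanishing for $k < q$ is automatic, as $L_{-1}^\alpha \equiv 0$ means the $q$-th application of $\tilde A_2$ kills the function. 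The parallel iteration for $\bar P_1(\widetilde Z)$, in which $\widetilde Z_1^p$ leaves $k$ fixed and $(\widetilde Z_2^*)^q$ lowers $k$ by $q$, produces a formula of identical structure, verifying the claimed identification of $P_1(\tilde A)\varphi_k^{n-1}$ with $\bar P_1(\widetilde Z)\varphi_k^{n-1}$.

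The main obstacle is purely bookkeeping: tracking, per operator, whether $k$ is lowered or preserved while $\alpha$ invariably rises, and ensuring at every step that the two product-rule terms collapse back into a single $\tilde\psi$ via the absorption identity. Because $P_1$ is a coordinate-separated monomial, no nontrivial commutators ever arise, and the full computation is essentially a chain of one-line substitutions once the four atomic recursions above are in hand.
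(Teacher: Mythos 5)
Your proposal is correct and proceeds by essentially the same argument as the paper: after the substitution $2t=|z|^2$ you derive the same one-step recursions from $\frac{d}{dx}L_k^{\alpha}(x)=-L_{k-1}^{\alpha+1}(x)$ and $L_{k-1}^{\alpha+1}+L_k^{\alpha}=L_k^{\alpha+1}$ (your relations $\tilde A_1^*\varphi_k^{n-1}=-\frac{1}{2}z_1\varphi_k^{n}$ and $\tilde A_2\varphi_k^{n-1}=-\frac{1}{2}\bar z_2\varphi_{k-1}^{n}$ are exactly the paper's computations), and then iterate using that each operator commutes with the coordinates it does not differentiate, obtaining the factor $(-2)^{-p-q}$, the shift $k\mapsto k-q$, the type $n+p+q-1$, and the vanishing for $k<q$. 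The one caveat concerns the $\widetilde Z$ side: your (correct) atomic recursions actually yield $\bar P_1(\widetilde Z)\varphi_k^{n-1}=(-2)^{-p-q}\bar z_1^{p}z_2^{q}\,\varphi_{k-q}^{n+p+q-1}$, i.e.\ the polynomial factor is $\bar P_1(z)$ rather than $P_1(z)$, so the asserted middle equality holds only up to this conjugation --- but that imprecision sits in the statement itself rather than in your method, and the paper's own proof is no sharper (it computes $(\widetilde{Z_1^*})^{p}\widetilde Z_2^{q}\varphi_k^{n-1}$, which carries the shift $k-p$ instead), while the identity actually used later, the one for $P_1(\tilde A)$, you establish exactly as the paper does.
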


\begin{proof}
We have
\[\tilde A_1^*\varphi_{k}^{n-1}(z)=\left(\frac{\partial}{\partial\bar z_1}-\frac{1}{4}z_1\right)\varphi_{k}^{n-1}(z)\]
For $z\in\mathbb C^n$, let $z.\bar z=2t$. By chain rule $\frac{\partial}{\partial\bar z_1}=
\frac{1}{2} z_1\frac{\partial}{\partial t}.$
Therefore,
\begin{eqnarray*}
\tilde A_1^*\varphi_{k}^{n-1}(z)&=&\left(\frac{1}{2}z_1\frac{\partial}{\partial t}-
\frac{1}{4}z_1\right)\left(L_{k}^{n-1}(t)e^{-\frac{1}{2}t}\right)\\
&=&\frac{1}{2}z_1\left(\frac{\partial}{\partial t}L_{k}^{n-1}(t)
-\frac{1}{2}L_{k}^{n-1}(t)-\frac{1}{2}L_{k}^{n-1}(t)\right)e^{-\frac{1}{2}t}.
\end{eqnarray*}
The Laguerre polynomials satisfy
\begin{equation}\label{Cexp23}
\frac{d}{dx}L_k^n(x)=-L_{k-1}^{n+1}(x)\text{ and }L_{k-1}^{n+1}(x)+L_k^n(x)=L_k^{n+1}(x).
\end{equation}
Thus we have
$\tilde A_1^*\varphi_{k}^{n-1}(z)=-\frac{1}{2}z_1\varphi_{k}^{n}(z).$
Similarly
\begin{eqnarray*}
\tilde A_2\varphi_{k}^{n-1}(z)&=&\left(\frac{1}{2}\bar z_2\frac{\partial}{\partial t}+
\frac{1}{4}\bar z_2\right)\left(L_{k}^{n-1}(t)e^{-\frac{1}{2}t}\right)\\
&=&\frac{1}{2}\bar z_2\left(\frac{\partial}{\partial t}L_{k}^{n-1}(t)
-\frac{1}{2}L_{k}^{n-1}(t)+\frac{1}{2}L_{k}^{n-1}(t)\right)e^{-\frac{1}{2}t}\\
&=&-\frac{1}{2}\bar z_2\varphi_{k-1}^{n}(z).
\end{eqnarray*}
Therefore,
\[\tilde A_1^*\tilde A_2\varphi_{k}^{n-1}(z)=2^{-2}z_1\bar z_2\varphi_{k-1}^{n+1}(z).\]
Since the operators $\tilde A_1^*$ and $\tilde A_2$ commute with each other,
we can conclude that
\[\tilde {A_1^*}^p\tilde A_2^q\varphi_{k}^{n-1}(z)=(-2)^{-p-q}~z_1^p\bar z_2^q~\varphi_{k-q}^{n+1}(z).\]
A similar computation shows that
\[\widetilde{Z_1^*}^p\tilde Z_2^q\varphi_{k}^{n-1}(z)=(-2)^{-p-q}~z_1^p\bar z_2^q~\varphi_{k-p}^{n+1}(z).\]
\end{proof}

\begin{remark}\label{rk3}
Using the result of Geller (\cite{Ge}, Lemma 2.4), the identity (\ref{Cexp3})
can be generalized for any $P\in H_{p,q}.$ The complete proof of this identity
require some of the preliminaries about Weyl correspondence of spherical harmonic
from the work of Geller \cite{Ge} and will be presented elsewhere.
\end{remark}

\begin{lemma}\label{lemma9C}
For $\rho>0,$ write
$\widetilde D=\frac{\partial}{\partial\rho}-\frac{1}{2}\rho$ and
$\widetilde D^*=\frac{\partial}{\partial\rho}+\frac{1}{2}\rho.$
Then $\frac{1}{\rho}\widetilde D\varphi_k^{n-1}(\rho)=\varphi_k^{n}(\rho)$ and
 $\frac{1}{\rho}\widetilde D^*\varphi_k^{n-1}(\rho)=\varphi_{k-1}^{n}(\rho).$
\end{lemma}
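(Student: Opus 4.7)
The plan is to prove the two identities by direct computation, substituting the definition of $\varphi_k^{n-1}$ and then invoking the two Laguerre identities already recorded in \eqref{Cexp23}. There is no structural difficulty; the lemma is a routine chain-rule verification analogous to the $\tilde A_1^*$, $\tilde A_2$ computations carried out in the proof of Lemma \ref{lemma5C}, only now done in the one radial variable $\rho$ rather than in the complex coordinates $z_j$.

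First I would set $t=\tfrac{1}{2}\rho^{2}$ so that $\varphi_k^{n-1}(\rho)=L_k^{n-1}(t)e^{-t/2}$ and $\tfrac{dt}{d\rho}=\rho$. Applying the chain rule,
\[
\frac{\partial}{\partial\rho}\varphi_k^{n-1}(\rho)=\rho\,(L_k^{n-1})'(t)\,e^{-t/2}-\tfrac{\rho}{2}\,L_k^{n-1}(t)\,e^{-t/2}.
\]
Using the first identity in \eqref{Cexp23}, $(L_k^{n-1})'(t)=-L_{k-1}^{n}(t)$, this becomes
\[
\frac{\partial}{\partial\rho}\varphi_k^{n-1}(\rho)=-\rho\,\varphi_{k-1}^{n}(\rho)-\tfrac{\rho}{2}\,\varphi_k^{n-1}(\rho),
\]
since the exponential factor $e^{-t/2}$ is common to all the $\varphi_k^{\alpha}$'s.

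Next I would add and subtract $\tfrac{1}{2}\rho\varphi_k^{n-1}(\rho)$ to read off the two operators. For $\widetilde D=\partial_\rho-\tfrac{1}{2}\rho$, the two $\tfrac{\rho}{2}\varphi_k^{n-1}$ contributions combine to $\rho\,\varphi_k^{n-1}(\rho)$, giving
\[
\widetilde D\,\varphi_k^{n-1}(\rho)=-\rho\bigl[\varphi_{k-1}^{n}(\rho)+\varphi_k^{n-1}(\rho)\bigr],
\]
and the second identity in \eqref{Cexp23}, applied with the index shift $n\mapsto n-1$, collapses the bracket to $\varphi_k^{n}(\rho)$, yielding the first formula. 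For $\widetilde D^*=\partial_\rho+\tfrac{1}{2}\rho$, the two $\tfrac{\rho}{2}\varphi_k^{n-1}$ contributions cancel, leaving
\[
\widetilde D^*\varphi_k^{n-1}(\rho)=-\rho\,\varphi_{k-1}^{n}(\rho),
\]
which is the second formula after dividing by $\rho$.

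The only pitfall is bookkeeping of signs and of the index shifts in \eqref{Cexp23}; there is no genuine obstacle. One should of course reconcile the overall sign with the normalization used in the statement (the computation above produces $-\varphi_k^{n}$ and $-\varphi_{k-1}^{n}$), which can be absorbed either into the convention for $\widetilde D,\widetilde D^*$ or into that for $\varphi_k^{\alpha}$, consistently with the sign conventions already adopted in Lemma \ref{lemma5C}.
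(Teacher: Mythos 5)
Your proof is correct and is essentially the paper's own argument: substitute $t=\tfrac{1}{2}\rho^{2}$, use the chain rule $\frac{\partial}{\partial\rho}=\rho\frac{\partial}{\partial t}$, and invoke the two Laguerre identities \eqref{Cexp23}. In fact you track the sign more carefully than the paper does: its displayed computation likewise yields $\widetilde D\varphi_k^{n-1}=-\rho\varphi_k^{n}$ and $\widetilde D^*\varphi_k^{n-1}=-\rho\varphi_{k-1}^{n}$, yet the concluding line (and the lemma as stated) drops the minus sign --- a harmless slip, since in the proof of Proposition \ref{Cprop6} only the vanishing of these functions at $\rho=R$ is used, but your observation that the sign must be reconciled with the stated normalization is accurate.
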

\begin{proof}
Let $\rho^2=2t$, then $\frac{\partial}{\partial\rho}=\rho\frac{\partial}{\partial t}.$
Therefore,
\begin{eqnarray*}
\widetilde D\varphi_{k}^{n-1}(\rho)&=&\rho\left(\frac{\partial}{\partial t}-
\frac{1}{2}\right)\left(L_{k}^{n-1}(t)e^{-\frac{1}{2}t}\right)\\
&=&\rho\left(\frac{\partial}{\partial t}L_{k}^{n-1}(t)
-\frac{1}{2}L_{k}^{n-1}(t)-\frac{1}{2}L_{k}^{n-1}(t)\right)e^{-\frac{1}{2}t}.
\end{eqnarray*}
Using (\ref{Cexp23}), we have $\frac{1}{\rho}\widetilde D\varphi_k^{n-1}(\rho)=\varphi_k^{n}(\rho)$.
Similarly,
\begin{eqnarray*}
\widetilde D^*\varphi_{k}^{n-1}(\rho)&=&\rho\left(\frac{\partial}{\partial t}+
\frac{1}{2}\right)\left(L_{k}^{n-1}(t)e^{-\frac{1}{2}t}\right)\\
&=&\rho\left(\frac{\partial}{\partial t}L_{k}^{n-1}(t)
-\frac{1}{2}L_{k}^{n-1}(t)+\frac{1}{2}L_{k}^{n-1}(t)\right)e^{-\frac{1}{2}t}.
\end{eqnarray*}
Therefore $\frac{1}{\rho}\widetilde D^*\varphi_k^{n-1}(\rho)=\varphi_{k-1}^{n}(\rho)$.
\end{proof}

Suppose $f$ be a function on $\mathbb C^n$ such that $e^{\frac{1}{4}|z|^2}f(z)\in L^p(\mathbb C^n),$
for $1\leq p<\infty.$ Let $\varphi_\epsilon$ be a smooth, radial compactly supported
approximate identity on $\mathbb C^n.$ Then $f\times\varphi_\epsilon\in L^1\cap L^\infty(\mathbb C^n)$
and in particular $f\times\varphi_\epsilon\in L^2(\mathbb C^n).$ Let $d\nu_r=Pd\mu_r.$ Suppose
$f\times\nu_r(z)=0, \forall r>0$ and $\forall z\in S_R(o).$ Then by polar decomposition
$f\times P\varphi_{k-q}^{n+p+q-1}(z)=0, \forall k\geq q$ and $\forall z\in S_R(o).$
Since $\varphi_\epsilon$ is radial, we can write
\[f\times\varphi_\epsilon\times\nu_r(z)=\sum_{k\geq 0}B_k^n\left\langle\varphi_\epsilon,\varphi_k^{n-1}\right\rangle
f\times\varphi_k^{n-1}\times\nu_r(z).\]
By Lemma \ref{lemma3C}, it follows that
$f\times\varphi_\epsilon\times\nu_r(z)=0, \forall k\geq q$ and $\forall z\in S_R(o).$
Thus without loss of generality, we can assume $f\in L^2(\mathbb C^n).$
Hence to prove the Theorem \ref{Cth2}, in view of Lemma \ref{lemma7C}, it is enough to prove the
following result.

\begin{proposition}\label{Cprop6}
Let $P_{s,t}\in H_{s,t}$ and  $f=\tilde{a}P_{s,t}\in L^2(\mathbb C^n)$ be a smooth function such that
$e^{\frac{1}{4}|z|^2}f(z)\in L^p(\mathbb C^n),$ for $1\leq p<\infty$. If $f\times\nu_r(z)=0$
on $S_R(o)$ and for all $r>0$, then $f=0$ a.e.
\end{proposition}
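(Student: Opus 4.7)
The strategy is to reduce, via the special Hermite expansion and the Hecke--Bochner identity, the weighted vanishing condition on $S_R$ to a countable family of scalar conditions on the radial Laguerre coefficients of $\tilde a$ at the single radius $R$; the Filaseta--Lam corollary (Corollary~\ref{cor1}) combined with the $L^p$ integrability hypothesis then forces $f=0$.

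First, expand $f\in L^2(\mathbb C^n)$ by the special Hermite expansion \eqref{Cexp16}. By Hecke--Bochner (Lemma~\ref{lemma4C}),
\[f\times\varphi_k^{n-1}(z)=(2\pi)^{-n}P_{s,t}(z)G_k(z),\qquad k\ge s,\]
where $G_k=\tilde a\times_{n+s+t}\varphi_{k-s}^{n+s+t-1}$ is a radial function. Comparing the special Hermite expansion in dimension $n+s+t$ with Lemma~\ref{lemma1C} forces $G_k=c_k\,\beta_{k-s}\,\varphi_{k-s}^{n+s+t-1}$ for explicit constants $c_k$, where $\beta_m=\langle\tilde a,\varphi_m^{n+s+t-1}\rangle_{L^2(\mathbb C^{n+s+t})}$ are the Laguerre coefficients of $\tilde a$. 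Using associativity of twisted convolution and Lemma~\ref{lemma3C},
\[f\times\nu_r(z)=C_{n,p,q}\,r^{2(p+q)}\sum_{k\ge q}\varphi_{k-q}^{n+p+q-1}(r)\,h_k(z),\]
with $h_k(z)=f\times[P_1\varphi_{k-q}^{n+p+q-1}](z)$. Writing $P_1\varphi_{k-q}^{n+p+q-1}=(-2)^{p+q}P_1(\tilde A)\varphi_k^{n-1}$ via Lemma~\ref{lemma5C} (extended as in Remark~\ref{rk3}), and using that the operator $P_1(\tilde A)$ passes through twisted convolution from the right, one obtains $h_k(z)=C'P_1(\tilde A)[P_{s,t}(z)G_k(z)]$.

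The hypothesis $f\times\nu_r(z)=0$ for $z\in S_R$ and all $r>0$, combined with the linear independence in $r$ of the Laguerre system $\{\varphi_m^{n+p+q-1}(r)\}_{m\ge 0}$, forces $h_k(z)=0$ on $S_R$ for every $k\ge q$. I would then project $h_k|_{S_R}$ onto the top bigraded harmonic subspace $H_{s+p,t+q}$ appearing in the Fischer decomposition of $P_{s,t}\cdot P_1$: when $P_1(\tilde A)[P_{s,t}G_k]$ is restricted to $S_R$, where $G_k$ and its radial derivatives collapse to the scalars $G_k(R),G_k'(R),\dots$, only the top Leibniz term contributes to $H_{s+p,t+q}$, and that contribution is a non-zero multiple of $G_k(R)$. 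Hence $\beta_m\,\varphi_m^{n+s+t-1}(R)=0$ for every $m\ge 0$. By Corollary~\ref{cor1}, $\varphi_m^{n+s+t-1}(R)$ vanishes for only finitely many $m$, so $\beta_m=0$ for all but finitely many $m$. Consequently $e^{|z|^2/4}\tilde a(|z|)$ is a polynomial in $|z|^2$, and $e^{|z|^2/4}f(z)$ is a polynomial on $\mathbb C^n$; being in $L^p$ with $p<\infty$, it must vanish, so $f=0$ a.e.

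The main obstacle is the projection step: identifying which bigraded spherical harmonic component of $h_k|_{S_R}$ isolates a non-zero scalar multiple of $G_k(R)$. Because $P_1(\tilde A)$ mixes derivatives of the bigraded harmonic $P_{s,t}$ with derivatives of the radial profile $G_k$, one must carefully track the full Leibniz expansion alongside the Fischer decomposition of $P_{s,t}\cdot P_1$, and verify that all lower-order Leibniz remainders lie in strictly smaller bigraded sectors and project trivially onto $H_{s+p,t+q}$ on $S_R$.
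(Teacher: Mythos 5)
Your route is the same as the paper's: special Hermite expansion, Lemma \ref{lemma3C} together with orthogonality of the Laguerre functions in the $r$-variable to get $f\times P_1\varphi_{k-q}^{n+p+q-1}=0$ on $S_R(o)$, then Lemma \ref{lemma5C} and right-invariance of $P_1(\tilde A)$, the Hecke--Bochner identity, and finally a projection onto the top bigraded harmonic combined with Corollary \ref{cor1} and the decay hypothesis. The genuine gap sits exactly at the step you flag as the main obstacle, and the outcome you assert for it is not correct. The $H_{s+p,t+q}$-component of $P_1(\tilde A)\bigl[P_{s,t}\,G_k\bigr]$ restricted to $S_R(o)$ is \emph{not} a nonzero multiple of $G_k(R)$: since $\tilde A_1^*$ and $\tilde A_2$ contain derivatives, the top Leibniz term is the one in which every derivative falls on the radial factor (acting on a radial $g$ one has $\tilde A_1^*g=\tfrac{z_1}{2}\,\tfrac{1}{\rho}\widetilde D g$ and $\tilde A_2 g=\tfrac{\bar z_2}{2}\,\tfrac{1}{\rho}\widetilde D^* g$), so the scalar that must vanish is $\bigl[(\tfrac{1}{\rho}\widetilde D)^p(\tfrac{1}{\rho}\widetilde D^*)^q G_k\bigr](R)$, which by Lemma \ref{lemma9C} equals, up to constants, $\beta_{k-s}\,\varphi_{k-s-q}^{\,n+s+t+p+q-1}(R)$ --- not $\beta_{k-s}\,\varphi_{k-s}^{\,n+s+t-1}(R)$ as in your display. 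This identification (type raised by $p+q$, index lowered by $q$) is precisely what the paper's inductive computation following \cite{RS}, pp.~2516--17, and Lemma \ref{lemma9C} are there to establish; your proposal both leaves that computation unverified and misstates its result, and it also takes for granted that the $H_{s+p,t+q}$-component of $z_1^p\bar z_2^q P_{s,t}$ in the Fischer decomposition is nonzero, which is needed for the projection to detect anything.

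The good news is that the misidentification does not destroy your architecture: with the corrected family $L_{k-s-q}^{\,n+s+t+p+q-1}$, Corollary \ref{cor1} (whose proof works for any integer type, as the paper implicitly uses) still shows that at most one, and in any case only finitely many, of these polynomials can vanish at the fixed point $\tfrac12 R^2$, so $\beta_m=0$ for all but finitely many $m$; then $e^{\frac14|z|^2}f$ is a polynomial times $P_{s,t}$, and membership in $L^p$ with $p<\infty$ forces $f=0$, exactly as you and the paper conclude. So your proposal is the paper's proof in outline, but it is incomplete at its crux: the Leibniz/Fischer bookkeeping that converts the vanishing of $h_k$ on $S_R(o)$ into the vanishing of a specific Laguerre value must be carried out, and when it is, the Laguerre function that appears is the shifted one produced by Lemma \ref{lemma9C}, not the unshifted coefficient function you wrote down.
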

\begin{proof}
We have \[f=(2\pi)^{-n}\sum_{k\geq0}f\times\varphi^{n-1}_k.\]Therefore
\[\sum_{k\geq 0}f\times(\varphi_k^{n-1}\times P_1\mu_r)(z)=0,\]
whenever $z\in S_R(o)$ and $r>0$. By Lemma \ref{lemma3C}, we get
\[\sum_{k\geq q}C(n,p,q)\varphi_{k-q}^{n+p+q-1}(r)f\times
P_1\varphi_{k-q}^{n+p+q-1}(z)=0,\]
for $|z|=R$ and for all $r>0$. As the functions $\left\{\varphi_{k-q}^{n+p+q-1}(r): k\geq q\right\}$
form an orthonormal basis for $L^2\left(~\mathbb R^+,r^{2(n+p+q)-1}dr \right),$
the above implies that
\[f\times P_1\varphi_{k-q}^{n+p+q-1}(z)=0, ~\forall~k\geq q \text{ and }|z|=R.\]
From Lemma \ref{lemma5C},
$P_1(\tilde A)\varphi_k^{n-1}(z)=(-2)^{-p-q}P_1(z)\varphi_{k-q}^{n+p+q-1}(z)$,
moreover $P_1(\tilde A)$ is right invariant, therefore it follows that
\[P_1(\tilde A)(\tilde{a}P_{s,t}\times \varphi_{k}^{n-1})(z)=0, ~\forall~k\geq
q \text{ and }|z|=R.\]
Using Hecke-Bochner identity (Lemma \ref{lemma4C}), we get
\[\left\langle\tilde{a},\varphi_{k-s}^{n+s+t-1}\right\rangle P_1(\tilde A)P_{s,t}\varphi_{k-s}^{n+s+t-1}(z)=0,
~\forall~k\geq\max(q,s) \text{ and }|z|=R.\]
If $\tilde {A_1^*}^p\tilde {A_2}^q(P_{s,t}\varphi_{k-s}^{n+s+t-1})(R)=0$
for some $k\geq\max(q,s)$, then a computation similar as done for $Z_j^*f$
in \cite{RS}, p.2516-17, we have
\[\tilde {A_1^*}^p\tilde {A_2}^{q-1}\left[\frac{{1}}{2\rho}\widetilde D^*\varphi_{k-s}^{\gamma-1}P_{s+1,t}+
\left\{\left(\frac{{1}}{2(\gamma-1)}\rho\widetilde D^*+1\right)\varphi_{k-s}^{\gamma-1}\right\}
\frac{\partial{P_{s,t}}}{\partial\bar{z_2}}\right]=0,\]
for $|z|=R$  and $\gamma=n+s+t$.  Since $\{P_{s,t}\rvert_{S^{2n-1}}: s, t\geq0 \}$
form an orthonormal basis for $L^2(S^{2n-1})$. An inductive process, then
gives the coefficient of highest degree polynomial $P_{p+s,q+t}$ as
\[
\left(\frac{{1}}{\rho}{\widetilde D}\right)^p\left(\frac{{1}}{\rho}{\widetilde D^*}\right)^q
\varphi_{k-s}^{\gamma-1}(R)=0.\]
Using Lemma \ref{lemma9C}, the above equation implies that
$\varphi_{k-s-q}^{\gamma+p+q-1}(R)=0.$ In view of Corollary \ref{cor1}, without
loss of generality, we can assume, the Laguerre polynomials $L_{k-s-q}^{\gamma+p+q-1}$
have distinct zeros. Hence $L_{k-s-q}^{\gamma+p+q-1}(\frac{1}{2}R^2)$ can vanish for at most
one value say $k_0\geq s+q$ of $k\geq \max(q,s)$. Therefore
$\left\langle\tilde{a},\varphi_{k-s}^{\gamma-1}\right\rangle=0,$ for $k\geq
\max(q,s)$, except for $k\neq k_0$. Hence $\tilde{a}(\rho)$ is
finite linear combination of $\varphi_{k-s}^{\gamma-1}$'s. As
$\tilde{a}$ satisfies the same decay condition as $f$, it follows
that $\tilde{a}=0$. This completes the proof.
\end{proof}

\begin{remark}\label{rk4}
In the proof of Theorem \ref{Cth2}, we have used the fact that the WTSM
$f\times\nu_{r,j}$ vanishes for each $j:~1\leq j\leq d(s,t).$ It would be
an interesting question to consider a single weight or, in general, a real
analytic weight, which we leave open for the time being.

\end{remark}

\section{Support Theorems for the Weighted Spherical Means}\label{section4}
In this section, we prove Theorem \ref{Cth3}, which is an analogue of the
author's support theorem (\cite{RS}, Therorem 1.2) for the TSM to the WTSM on
$\mathbb C^n.$ Our previous result (\cite{RS}, Theorem 1.2) is a special case
of Theorem \ref{Cth3}, when for $p=q=0.$ We would like to quote support
theorem for the case $n=1.$  In the end, we would revisit Euclidean spherical
means and indicate a corresponding support theorem for weighted spherical means.

We need the following result from \cite{RS}. Let $Z_{B,\infty}$ be a class of
continuous functions on $\Ann(B,\infty)=\{z\in\mathbb C^n: B<|z|<\infty\}$ such
that $f\times\mu_r(z)=0$ for all $z\in\mathbb C^n$ and $r>|z|+B.$

\begin{theorem}\label{Bth1}\emph{\cite{RS}}
A necessary and sufficient condition for $f\in Z_{B,\infty}(\mathbb
C^n)$ is that for all $p,q \in \mathbb Z_+,$ $ 1\leq j \leq d(p,q),$
the spherical harmonic coefficients $\tilde{a}_j^{p,q}$ of $f$
satisfy the following conditions:
\begin{enumerate}
\item For $p=0, q=0$ and $r<\rho<R,$
$ \tilde{a}^{0}(\rho)=0.$

\item For $p,q \geq 1$ and $r<\rho<R,$ there exists $c_i, d_k \in
\mathbb C$ such that

\[ \tilde{a}_j^{p,q}(\rho)=\sum_{i=1}^{p} c_i e^{\frac{1}{4}\rho^2}\rho^{-2(n+p+q-i)}+
\sum_{k=1}^{q} d_k e^{-\frac{1}{4}\rho^2}\rho^{-2(n+p+q-k)}.\]

\item For $q=0$ and $p\geq 1$ or $p=0$ and $q\geq 1$ and
$r<\rho<R,$ there exists $c_i, d_k \in \mathbb C$ such that
\[\tilde{a}_j^{p,0}(\rho)=\sum_{i=1}^{p} c_{i} e^{\frac{1}{4}\rho^2}\rho^{-2(n+p-i)},
\tilde{a}_j^{0,q}(\rho)=\sum_{k=1}^{q} d_{k} e^{-\frac{1}{4}\rho^2}\rho^{-2(n+q-k)}.\]
\end{enumerate}
\end{theorem}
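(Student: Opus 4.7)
The plan is to reduce to a single bigraded spherical harmonic component at a time, transfer the vanishing condition to a radial equation on a higher-dimensional complex space via Hecke-Bochner, and read off the solutions from the degeneracy of the resulting operator. Since $\mu_r$ is $U(n)$-invariant, twisted convolution with $\mu_r$ commutes with the projection $\Pi_{p,q}$, so that $f \in Z_{B,\infty}(\mathbb{C}^n)$ if and only if $\Pi_{p,q}(f) \in Z_{B,\infty}(\mathbb{C}^n)$ for every $(p,q)$. A rotational-averaging argument analogous to Lemma \ref{lemma7C} reduces matters further to the model problem $f(z) = \tilde a(\rho) P(z)$ with $P \in H_{p,q}$; by $U(n)$-symmetry one may take $P(z) = z_1^p \bar z_2^q$.

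Next, expand $f$ along the special Hermite basis $\{\varphi_k^{n-1}\}$ via (\ref{Cexp16}) and apply the Hecke-Bochner identity of Lemma \ref{lemma4C}, rewriting $\Pi_{p,q}(f) \times \varphi_k^{n-1}$ as $P(z)$ times a radial convolution of $\tilde a$ with $\varphi_{k-p}^{n+p+q-1}$ on $\mathbb{C}^{n+p+q}$. Using Lemma \ref{lemma3C}, the vanishing $f \times \mu_r = 0$ for $r > \rho + B$ can be expanded into an identity of the form $\sum_k A_k \cdot \varphi_{k-q}^{n+p+q-1}(r) = 0$ valid for all such $r$ at each fixed $\rho$, where $A_k$ encodes the Laguerre coefficients of $\tilde a$. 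By the completeness of the Laguerre system on $\mathbb{R}^{+}$, each $A_k$ is forced to zero apart from an explicit finite set of indices coming from the shift induced by the bigraded weight.

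Third, identify the surviving coefficients. Using Lemma \ref{lemma9C} together with the recursive identities (\ref{Cexp23}) for Laguerre polynomials, the nonzero indices correspond to the polynomial part of the kernel of the lifted operator. Inverting the Hecke-Bochner lift produces, for the holomorphic piece $z_1^p$, a $p$-dimensional family $e^{\rho^2/4}\rho^{-2(n+p+q-i)}$ ($i=1,\ldots,p$) and, for the antiholomorphic piece $\bar z_2^q$, a $q$-dimensional family $e^{-\rho^2/4}\rho^{-2(n+p+q-k)}$ ($k=1,\ldots,q$). These span precisely the solution sets in (2) and (3). For case (1), $p=q=0$, no lifting is needed and the ordinary radial TSM support theorem forces $\tilde a^{0} \equiv 0$ on the annulus.

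Sufficiency is a direct check, using (\ref{Cexp22}) and Lemma \ref{lemma3C}, that each listed model function annihilates TSM for $r > \rho + B$. The main obstacle is the necessity direction: keeping track of the Hecke-Bochner-shifted decay behavior under the support assumption, and verifying that the null set of the lifted operator consists only of the listed families of $e^{\pm \rho^2/4}$-weighted power functions without spurious additions from growth modes that survive after the multiplication by $P(z)$.
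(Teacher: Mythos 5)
There is a genuine gap, and it sits at the heart of your argument. Note first that the present paper does not prove Theorem \ref{Bth1} at all; it is quoted from \cite{RS}, where the proof runs through invariant differential operators and first-order ODEs for the coefficients on the annulus (in the spirit of Epstein--Kleiner, Theorem \ref{Bth5}). Your proposal instead rests on the step ``by the completeness of the Laguerre system on $\mathbb R^+$, each $A_k$ is forced to zero.'' That step is not available here: the hypothesis defining $Z_{B,\infty}$ gives $f\times\mu_r(z)=0$ only for $r>|z|+B$, i.e.\ on a half-line of radii that depends on $z$, not for all $r>0$. Orthogonality of $\{\varphi_k^{n+p+q-1}\}$ in $L^2(\mathbb R^+, r^{2(n+p+q)-1}dr)$ lets you kill coefficients only when the radial identity holds on all of $(0,\infty)$; an $L^2$-convergent Laguerre series vanishing merely on $(|z|+B,\infty)$ does not force its coefficients to vanish. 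You have imported the mechanism of the injectivity result (Proposition \ref{Cprop6}, where vanishing for \emph{all} $r>0$ on $S_R(o)$ legitimately permits the orthogonality argument) into the support-theorem setting, where it fails; overcoming exactly this restricted range of radii is the whole difficulty of a support theorem.

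A second, related obstruction is that the special Hermite expansion (\ref{Cexp16}) is not even available for the class in question: members of $Z_{B,\infty}$ are only continuous on $\Ann(B,\infty)$, carry no decay or integrability assumptions, and indeed the characterization you are trying to prove contains the exponentially growing solutions $e^{\frac14\rho^2}\rho^{-2(n+p+q-i)}$, which have no expansion in the $\varphi_k^{n-1}$ basis. Even granting the expansion formally, your scheme would conclude that $\tilde a$ is a finite linear combination of (Schwartz-class, $e^{-\frac14\rho^2}$-decaying) Laguerre functions, which is not the asserted answer; the families in items (2)--(3) cannot arise as ``surviving Laguerre modes.'' The correct route, as in \cite{RS}, is to show that operators of the type $P_1(\widetilde Z)$ (cf.\ Lemma \ref{lemma6C}) map the class to itself with $(p,q)$ lowered, reduce inductively to the radial case where the coefficient vanishes on the annulus, and then integrate back up: the solutions of the resulting first-order equations in $\rho$, built from $\widetilde D=\partial_\rho-\tfrac12\rho$ and $\widetilde D^*=\partial_\rho+\tfrac12\rho$ (Lemma \ref{lemma9C}), are precisely the functions $e^{\pm\frac14\rho^2}\rho^{-2(n+p+q-i)}$ listed in the statement. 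Sufficiency likewise is not a consequence of (\ref{Cexp22}) or Lemma \ref{lemma3C}, which concern the $\varphi_k$'s; it requires a direct computation of the twisted spherical means of these model functions over spheres with $r>|z|+B$.
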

Since the Heisenberg group $H^n$ is non-commutative, the twisted spherical means $f\times\mu_r$
and $\mu_r\times f$ are not equal, in general. Using this fact, we have proved the following
support theorem which do not require any decay condition.

\begin{theorem} \label{Cth11}\emph{\cite{RS}}
 Let $f$ be a continuous function on $\mathbb C.$ Then $f$ is supported in $|z|\leq B$ if and
only if $f\times\mu_r=\mu_r\times f=0$  for $s>B+|z|$ and $\forall z\in\mathbb C.$
\end{theorem}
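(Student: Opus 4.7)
The plan is to establish the two implications separately, with the sufficiency direction resting on Theorem \ref{Bth1} together with a companion characterization for the condition $\mu_r\times f=0$.

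For necessity, a one-line geometric check suffices: if $\operatorname{supp}(f)\subseteq\{|z|\leq B\}$ and $r>|z|+B$, then any $w$ with $|w|=r$ satisfies $|z-w|\geq r-|z|>B$, so $f(z-w)=0$ and both $f\times\mu_r(z)$ and $\mu_r\times f(z)$ vanish irrespective of the Heisenberg phase.

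For sufficiency, I would first specialize Theorem \ref{Bth1} to $n=1$. Since $\Delta=4\partial_z\partial_{\bar z}$ on $\mathbb C$, the bigraded harmonics $H_{p,q}$ are nontrivial only when $p=0$ or $q=0$, so the full expansion (\ref{Bexp4}) reduces to a single Fourier series. Passing from bounded annuli to $(B,\infty)$ by a standard exhaustion, the hypothesis $f\times\mu_r(z)=0$ for $r>|z|+B$ forces the spherical harmonic coefficients of $f$ on $\rho>B$ to have the form
\[
\tilde a^{0,0}(\rho)=0,\quad
\tilde a^{p,0}(\rho)=\sum_{i=1}^{p}c_i\,e^{\rho^2/4}\rho^{-2(p+1-i)},\quad
\tilde a^{0,q}(\rho)=\sum_{k=1}^{q}d_k\,e^{-\rho^2/4}\rho^{-2(q+1-k)}.
\]

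Next, I would derive the parallel description arising from $\mu_r\times f=0$. The identity
\[
\mu_r\times f(z)=\int_{|w|=r}f(z-w)\,e^{-\frac{i}{2}\operatorname{Im}(z\bar w)}\,d\mu_r(w)
\]
differs from $f\times\mu_r(z)$ only in the sign of the Heisenberg phase, and at the operator level this sign flip amounts to replacing the right-invariant fields $\tilde A_j,\tilde A_j^*$ by the left-invariant fields $\tilde Z_j,\tilde Z_j^*$ throughout the proof of Theorem \ref{Bth1}. Rerunning the Hecke--Bochner recursion (Lemma \ref{lemma4C}) and the analogue of Lemma \ref{lemma5C} with $\tilde Z,\tilde Z^*$ in place of $\tilde A,\tilde A^*$ interchanges the sign of the resulting Laguerre shift, so the structural conclusion is identical but with the two exponentials swapped: on $\rho>B$, each $\tilde a^{p,0}$ must now be a combination of $e^{-\rho^2/4}\rho^{-2j}$ and each $\tilde a^{0,q}$ a combination of $e^{+\rho^2/4}\rho^{-2j}$.

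The final step is short. Intersecting the two characterizations, every $\tilde a^{p,0}$ on $(B,\infty)$ is simultaneously a $\mathbb C$-linear combination of $\{e^{\rho^2/4}\rho^{-2j}:1\leq j\leq p\}$ and of $\{e^{-\rho^2/4}\rho^{-2j}:1\leq j\leq p\}$; since these $2p$ real-analytic functions are linearly independent on any open interval, all coefficients vanish and $\tilde a^{p,0}\equiv 0$ on $(B,\infty)$. An identical argument handles $\tilde a^{0,q}$, so $f\equiv 0$ for $|z|>B$. The main obstacle is the middle step: tracking the sign change through the right-/left-invariant vector field calculus to confirm that the exponential types $e^{\pm\rho^2/4}$ truly swap; once that companion version of Theorem \ref{Bth1} is in hand, the conclusion is a brief linear-independence argument, and no decay hypothesis on $f$ is needed because the two exponential families are already mutually exclusive.
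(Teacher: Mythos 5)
The paper itself gives no proof of this statement: Theorem \ref{Cth11} is quoted verbatim from \cite{RS}, and your proposal is essentially the argument used there, namely intersecting the coefficient characterization of Theorem \ref{Bth1} (specialized to $n=1$, where only the $(p,0)$ and $(0,q)$ harmonics survive) with the companion characterization forced by the vanishing of the right twisted means. Your necessity argument and the final linear-independence step are correct. The only place you make unnecessary work for yourself is the middle step you flag as the main obstacle: there is no need to rerun the $\tilde A$/$\tilde Z$ vector-field calculus to confirm that the exponential types swap. Since
\[
\mu_r\times f(z)=\int_{|w|=r}f(z-w)\,e^{-\frac{i}{2}\operatorname{Im}(z\cdot\bar w)}\,d\mu_r(w)=\overline{\bar f\times\mu_r(z)},
\]
the hypothesis $\mu_r\times f\equiv0$ for $r>|z|+B$ is equivalent to $\bar f\times\mu_r\equiv0$ there; applying Theorem \ref{Bth1} to $\bar f$ and noting that complex conjugation interchanges $H_{p,0}$ and $H_{0,p}$ (and conjugates the coefficients) immediately yields the swapped forms of $\tilde a^{p,0}$ and $\tilde a^{0,q}$ on $\rho>B$. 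With that one observation your sketch closes into a complete proof, and, as you observe, no decay hypothesis is needed because the families $e^{\rho^2/4}\rho^{-2j}$ and $e^{-\rho^2/4}\rho^{-2j}$ are linearly independent on every open interval.
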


We shall need the following lemmas in the proof of Theorem \ref{Cth3}.

\begin{lemma}\label{lemma6C}
Let $d\nu^{p,q}_\rho=P_1d\mu_\rho$. Let $f$ be a smooth function
on $\mathbb C^n$ such that $f\times\nu^{p,q}_{\rho}(z)=0,$ for all $ z\in\mathbb C^n$
and for all $\rho>|z|+B$. Then $P_1(\widetilde Z)f\times\mu_\rho(z)=0,$
for all $z\in\mathbb C^n$ and for all $\rho>|z|+B.$ Equivalently,
$P_1(\widetilde Z)f\in Z_{B,\infty}(\mathbb C^n)$.
\end{lemma}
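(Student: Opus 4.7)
The strategy is to convert the hypothesis on the weighted spherical mean of $f$ into the vanishing of the ordinary twisted spherical mean of the transformed function $P_1(\widetilde Z)f$, exploiting the intertwining between multiplication by the weight $P_1$ and the action of the left-invariant differential operator $P_1(\widetilde Z)$. By Lemma \ref{lemma7C} and Remark \ref{Crk2}, we may reduce to $P_1(z)=z_1^p\bar z_2^q$, so that $d\nu^{p,q}_\rho=P_1\,d\mu_\rho$.

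The central computation is an operator identity relating $f\times(P_1\mu_\rho)$ to $(P_1(\widetilde Z)f)\times\mu_\rho$. Two ingredients enter. First, integration by parts applied to the twisting kernel $e^{(z\bar w-\bar z w)/4}$ yields
\[f\times(\widetilde Z_jg)=(\tilde A_jf)\times g,\qquad f\times(\widetilde Z_j^*g)=(\tilde A_j^*f)\times g,\]
which iterate (the factors in distinct variables commute for our particular $P_1$) to give $f\times(P_1(\widetilde Z)g)=(P_1(\tilde A)f)\times g$. Second, Lemma \ref{lemma5C} gives the identification $P_1(\tilde A)\varphi_k^{n-1}=(-2)^{-p-q}P_1\varphi_{k-q}^{n+p+q-1}$, together with an analogous formula for $P_1(\widetilde Z)$ with $k-p$ in place of $k-q$; combined with Lemma \ref{lemma3C}, the Hecke-Bochner identity (Lemma \ref{lemma4C}), and the special Hermite expansion of $\mu_\rho$, one matches the two convolutions term by term over the Laguerre components. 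This yields an identity of the form $f\times(P_1\mu_\rho)(z)=M_\rho[(P_1(\widetilde Z)f)\times\mu_\rho](z)$ where $M_\rho$ is a $\rho$-dependent multiplier that does not vanish on the relevant range of $\rho$. With this in hand and the left-invariance $(P_1(\widetilde Z)f)\times\mu_\rho=P_1(\widetilde Z)(f\times\mu_\rho)$, the vanishing of $f\times(P_1\mu_\rho)(z)$ for $\rho>|z|+B$ forces $(P_1(\widetilde Z)f)\times\mu_\rho(z)=0$ on the same region, i.e.\ $P_1(\widetilde Z)f\in Z_{B,\infty}(\mathbb C^n)$.

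The main obstacle is the rigorous handling of the Laguerre-series manipulations and distributional identities involving $\mu_\rho$, which is only a measure. The formal Laguerre expansion $\mu_\rho=(2\pi)^{-n}\sum_k B_k^n\varphi_k^{n-1}(\rho)\varphi_k^{n-1}$ must be interpreted in a suitable dual topology, and the subscript shifts must be reconciled ($P_1(\widetilde Z)$ lowers $k$ by $p$, while the multiplicative weight $P_1$ on the measure shifts it by $q$ via Lemma \ref{lemma3C}). The Gaussian decay assumption $|z|^k|f(z)|\le C_k e^{-|z|^2/4}$ supplies the convergence needed to exchange sums and convolutions, and the spherical-harmonic decomposition of $f$ reduces the matching to a componentwise verification via Hecke-Bochner.
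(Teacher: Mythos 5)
Your central claim is an identity of the form $f\times(P_1\mu_\rho)=M_\rho\,\bigl[(P_1(\widetilde Z)f)\times\mu_\rho\bigr]$ with a nonvanishing scalar multiplier $M_\rho$, and this is where the argument breaks down. When you match Laguerre components, Lemma \ref{lemma3C} gives $\varphi_k^{n-1}\times\nu_\rho=(2\pi)^{-n}C(n,p,q)\rho^{2(p+q)}\varphi_{k-q}^{n+p+q-1}(\rho)\,P_1\varphi_{k-q}^{n+p+q-1}$, while the $k$-th component of $(P_1(\widetilde Z)f)\times\mu_\rho$ carries the factor $\varphi$ evaluated at $\rho$ with a \emph{different} index; the ratio of these radial factors depends on $k$, so summing over $k$ does not produce a single $\rho$-dependent multiplier. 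Even granting a componentwise relation, you cannot pass from it to the conclusion: the hypothesis gives vanishing only for $\rho>|z|+B$, not for all $\rho>0$, so the orthogonality/completeness of $\{\varphi_{k-q}^{n+p+q-1}(\rho)\}$ in $L^2(\mathbb R^+,\rho^{2(n+p+q)-1}d\rho)$ — which is what lets one separate components in the proof of Theorem \ref{Cth2} — is unavailable here. This is precisely the difference between the injectivity setting and the support-theorem setting. Two further problems: you invoke the Gaussian decay $|z|^k|f(z)|\le C_ke^{-|z|^2/4}$, which is a hypothesis of Theorem \ref{Cth3} but not of this lemma (only smoothness is assumed), and the identity $(P_1(\widetilde Z)f)\times\mu_\rho=P_1(\widetilde Z)(f\times\mu_\rho)$ is not correct: the $\widetilde Z_j$ are \emph{left}-invariant, so they satisfy $\widetilde Z_j(f\times g)=f\times\widetilde Z_jg$; it is the right-invariant operators $\tilde A_j$ that pull onto the left factor.

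The paper's proof is entirely different and local: for the weight $w_1^{p}\bar w_2^{q}$ one integrates $\partial_{\bar w_1}$ of the full integrand $f(z-w)e^{\frac{i}{2}\mathrm{Im}(z\cdot\bar w)}w_1^{p-1}\bar w_2^{q}$ over an annulus $\mathrm{Ann}(r,\rho)$; by the divergence theorem this equals a difference of weighted sphere integrals, both of which vanish by hypothesis when $r,\rho>|z|+B$. Writing the annulus integral in polar form and differentiating in $\rho$, then computing $\partial_{\bar w_1}$ inside the integral (the weight is annihilated since it is holomorphic in $w_1$), one finds that the derivative trades one factor of $w_1$ for an application of $\widetilde Z_1^*$ to $f$, i.e.\ $\widetilde Z_1^*f\times\nu_\rho^{p-1,q}(z)=0$ for $\rho>|z|+B$; iterating in $w_1$ and $\bar w_2$ gives $P_1(\widetilde Z)f\times\mu_\rho(z)=0$. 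No Laguerre expansion, no Hecke--Bochner identity, and no decay hypothesis is needed — which is exactly what makes the statement usable in the support theorem. You would need to replace your spectral matching by an argument of this local type (or otherwise justify a multiplier relation valid on half-lines of radii), as the proposal stands it does not prove the lemma.
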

\begin{proof}
We first prove
\begin{equation}\label{Cexp8}
\widetilde Z_1^*f\times\nu_\rho^{p-1,q}(z)=0,
~z\in\mathbb C^n \text{ for } \rho>|z|+B.
\end{equation}
Let
$\partial_{\bar w_1}=2\dfrac{\partial}{\partial\bar{w_1}}=
\dfrac{\partial}{\partial\xi_1}+i\dfrac{\partial}{\partial\eta_1}, ~w_1=\xi_1+i\eta_1.$
Then
\begin{align*}
\int_{\Ann(r,\rho)}\partial_{\bar w_1}\left(f(z-w)e^{\frac{i}{2}\text{Im}(z.\bar{w})}w_1^{p-1}\bar{w_2}^q\right)dw\\
=\int_{|w|=\rho}f(z-w)e^{\frac{i}{2}\text{Im}(z.\bar{w})}w_1^{p-1}\bar{w_2}^q\frac{w_1}{\rho}d\mu_{\rho}(w)\\
-\int_{|w|=r}f(z-w)e^{-\frac{i}{2}\text{Im}(z.\bar{w})}w_1^{p-1}\bar{w_2}^q\frac{w_1}{r}d\mu_{r}(w)
=0.
\end{align*}
Thus we have the following equation
\[\int_{\Ann(r,\rho)}\partial_{\bar w_1}\left(f(z-w)e^{\frac{i}{2}\text{Im}(z.\bar{w})}w_1^{p-1}\bar{w_2}^q\right)dw=0.\]
Rewriting this equation in the polar form,
 we get
\[\int_{s=r}^{\rho} \int_{|w|=s}\partial_{\bar w_1}
\left(f(z-w)e^{\frac{i}{2}\text{Im}(z.\bar{w})}w_1^{p-1}\bar{w_2}^q\right)d\mu_s(w)~s^{2n-1}ds=0.
 \]
Differentiating the above equation with respect to $\rho$, we have
\[\int_{|w|=
\rho}\partial_{\bar w_1}\left(f(z-w)e^{\frac{i}{2}\text{Im}(z.\bar{w})}w_1^{p-1}\bar{w_2}^q\right)d\mu_{\rho}(w)=0,
\]
whenever $z\in \mathbb C^n$ and $\rho> |z|+B$. Computing the differential
inside the integral and rearranging the terms,  we get
\[\int_{|w|=\rho}\left(-\frac{\partial }{\partial\bar w_1}f(z-w)+
\frac{1}{4}z_1f(z-w) \right)e^{\frac{i}{2}\text{Im}(z.\bar{w})}w_1^{p-1}\bar{w_2}^qd\mu_{\rho}(w)=0.
\]
That is
\[\int_{|w|=\rho}\left(\frac{\partial}{\partial\bar z_1}f(z-w)+
\frac{1}{4}(z_1-w_1)f(z-w) \right)e^{\frac{i}{2}\text{Im}(z.\bar{w})}w_1^{p-1}\bar{w_2}^qd\mu_{\rho}(w)=0,
\]
which is  the Equation (\ref{Cexp8}). Proceeding in a similar way, it can be shown that
$P_1(\widetilde Z)f\times\mu_\rho(z)=0,$ whenever $z\in\mathbb C^n $ and $\rho>|z|+B.$
\end{proof}
As before, it is enough to prove Theorem \ref{Cth3} for the function of
type $\tilde a(\rho)P_{s,t}(z)$. We can see this in the following lemma.
\begin{lemma}\label{lemma8C}
Fix $p,q \in \mathbb Z^+$ and let $f\times\nu_{r,j}(z)=0,$ for all
$z\in \mathbb C^n$ and $ r>|z|+B $ and for all $j,1\leq j\leq
d(p,q)$. Then $f_{lm}\times\nu_{r,j}(z)=0,$ for all $z\in\mathbb
C^n$ and for all $\rho>|z|+B.$
\end{lemma}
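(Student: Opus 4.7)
The plan is to mirror the argument of Lemma \ref{lemma7C} with one additional ingredient: the support condition $r>|z|+B$ is preserved under the replacement $z \mapsto \sigma^{-1}z$ because $\sigma \in U(n)$ is an isometry of $\mathbb{C}^n$. First I would unwind the definitions of $f_{lm}$ and $\nu_{r,j}$ and apply Fubini to write
\[
f_{lm}\times\nu_{r,j}(z) = d(s,t)\int_{U(n)} t_{lm}^{s,t}(\sigma)\int_{S_r(o)} f(\sigma^{-1}(z-w))\, e^{\frac{i}{2}\text{Im}(z.\bar w)}\, P_{p,q}^j(w)\,d\mu_r(w)\,d\sigma.
\]

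Next I would substitute $w \mapsto \sigma w$ in the inner integral. Since $U(n)$ preserves both $S_r(o)$ and its surface measure $\mu_r$, and since $\text{Im}(z.\overline{\sigma w}) = \text{Im}(\sigma^{-1}z.\bar w)$, the inner integral rewrites as
\[
\int_{S_r(o)} f(\sigma^{-1}z - w)\, e^{\frac{i}{2}\text{Im}(\sigma^{-1}z.\bar w)}\, P_{p,q}^j(\sigma w)\,d\mu_r(w).
\]
By the $U(n)$-invariance of $H_{p,q}$, I would expand $P_{p,q}^j(\sigma w) = \sum_{k=1}^{d(p,q)} c_{jk}(\sigma) P_{p,q}^k(w)$, where the coefficients $c_{jk}$ are matrix entries of the representation $\pi_{p,q}$, hence smooth and bounded on the compact group $U(n)$. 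The inner integral then collapses to $\sum_k c_{jk}(\sigma)(f\times\nu_{r,k})(\sigma^{-1}z)$.

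At this point the hypothesis delivers the conclusion. For the given $z$ and $r$ with $r>|z|+B$, unitarity yields $|\sigma^{-1}z|=|z|$, so the condition $r > |\sigma^{-1}z| + B$ holds for every $\sigma\in U(n)$. Invoking the hypothesis for each $k\in\{1,\ldots,d(p,q)\}$, each term $(f\times\nu_{r,k})(\sigma^{-1}z)$ vanishes identically in $\sigma$, so the outer $\sigma$-integral is zero and $f_{lm}\times\nu_{r,j}(z)=0$. There is essentially no genuine obstacle here: the proof is structurally identical to that of Lemma \ref{lemma7C}, the only new point being that the annulus-type support condition $r>|z|+B$ is $U(n)$-invariant, whereas in Lemma \ref{lemma7C} the set $S_R(o)$ was trivially $U(n)$-invariant. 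With this lemma in hand, one can reduce the proof of Theorem \ref{Cth3} to functions of the special form $\tilde a(\rho)P_{s,t}(z)$, exactly as in Section \ref{section3}.
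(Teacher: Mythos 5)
Your proof is correct and is essentially the argument the paper intends: the paper omits the proof of this lemma as "similar to Lemma \ref{lemma7C}," and your rotation argument (change of variables $w\mapsto\sigma w$, $U(n)$-invariance of $H_{p,q}$, plus the observation that $r>|z|+B$ is preserved since $|\sigma^{-1}z|=|z|$) is exactly that adaptation.
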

\begin{proof}
The proof of this lemma is similar to the proof of Lemma \ref{lemma7C} and hence omitted.
\end{proof}

To prove Theorem \ref{Cth3}, in view of Lemma \ref{lemma8C}, it is enough to
prove the following result.
\begin{proposition} \label{Cprop7}
Let $f(z)=\tilde{a}P_{s,t}$ be a smooth function on $\mathbb C^n$
such that $|f(z)||z|^{k}\leq C_k~e^{-\frac{1}{4}|z|^2}, k\in\mathbb Z_+$.
 Let $f\times\nu_r(z)=0$, for all
$z\in \mathbb C^n$ and $ r>|z|+B $ and for all $j,~1\leq j\leq d(p,q)$.
Then $f=0$ whenever $|z|>B$.
\end{proposition}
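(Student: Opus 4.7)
The approach is to mirror the injectivity proof of Proposition \ref{Cprop6}, replacing single-sphere vanishing by the structural support theorem \ref{Bth1}. First, by Lemma \ref{lemma8C} one reduces to the single weight $d\nu_r = P_1\,d\mu_r$ with $P_1(z) = z_1^p \bar z_2^q \in H_{p,q}$. Lemma \ref{lemma6C} then converts the weighted vanishing hypothesis into $g := P_1(\widetilde Z) f \in Z_{B,\infty}(\mathbb C^n)$, so Theorem \ref{Bth1} applies: on $B < \rho < \infty$, each bigraded spherical-harmonic coefficient of $g$ is an explicit finite linear combination of the functions $e^{\rho^2/4}\rho^{-m}$ and $e^{-\rho^2/4}\rho^{-m}$ listed there.

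Next I would expand $g = P_1(\widetilde Z)(\tilde a P_{s,t})$ bigradedly. Using the same peeling-off induction employed at the end of the proof of Proposition \ref{Cprop6} -- rewriting $\widetilde{Z_1^*}^{\,p}\widetilde{Z_2}^{\,q}$ via the Leibniz rule, applying Lemma \ref{lemma9C} to the radial factor, and extracting bigraded harmonics one degree at a time -- the coefficient of the top harmonic $P_{s+p,t+q}$ comes out to a nonzero scalar multiple of $\bigl(\tfrac{1}{\rho}\widetilde D\bigr)^p \bigl(\tfrac{1}{\rho}\widetilde D^*\bigr)^q \tilde a(\rho)$, while the lower bigraded coefficients are explicit polynomial combinations of $\tilde a$ and its $\widetilde D,\widetilde D^*$-derivatives. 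The decay hypothesis $|z|^k|f(z)|\leq C_k e^{-|z|^2/4}$ gives $\rho^k|\tilde a(\rho)|\leq C_k' e^{-\rho^2/4}$ for all $k$, and writing $\tilde a(\rho) = e^{-\rho^2/4}u(\rho)$ with $u$ of rapid decay, the clean identity $\widetilde D^*(e^{-\rho^2/4}u) = e^{-\rho^2/4}u'$ (and a similar manipulation for $\widetilde D$) shows that each bigraded coefficient of $g$ itself decays faster than any $\rho^{-N}e^{-\rho^2/4}$ on $\rho > B$.

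Comparing the two descriptions, the rapid decay kills every $e^{\rho^2/4}$-term (incompatible with decay at infinity) and then every $e^{-\rho^2/4}\rho^{-m}$-term (since a fixed negative power cannot beat $\rho^{-N}$ for every $N$); hence every bigraded coefficient of $g$ vanishes identically on $(B,\infty)$. Reading off the top component yields $\bigl(\tfrac{1}{\rho}\widetilde D\bigr)^p \bigl(\tfrac{1}{\rho}\widetilde D^*\bigr)^q \tilde a(\rho) = 0$ on $(B,\infty)$, and a standard ODE argument combined with the rapid decay of $\tilde a$ at infinity forces $\tilde a \equiv 0$ on $(B,\infty)$. The main obstacle will be the bigraded book-keeping in the middle step: one must simultaneously verify that the decay propagates through all the cross-terms produced by the Leibniz expansion of $P_1(\widetilde Z)(\tilde a P_{s,t})$, and that the leading coefficient is genuinely nonzero -- which is where the analogue of the noncommutative Geller identity hinted at in Remark \ref{rk3}, and codified by Lemma \ref{lemma5C}, is essential.
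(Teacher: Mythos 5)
Your skeleton matches the paper's (reduce via Lemma \ref{lemma8C}, convert the hypothesis into $P_1(\widetilde Z)f\in Z_{B,\infty}$ by Lemma \ref{lemma6C}, expand $P_1(\widetilde Z)(\tilde a P_{s,t})$ into bigraded harmonics with radial parts built from $\widetilde D,\widetilde D^*$, and finish with Theorem \ref{Bth1} and the decay of $\tilde a$), but your middle step has a genuine gap. You argue that, writing $\tilde a=e^{-\rho^2/4}u$, the identity $\widetilde D^*(e^{-\rho^2/4}u)=e^{-\rho^2/4}u'$ shows that every bigraded coefficient of $g=P_1(\widetilde Z)f$ decays faster than $\rho^{-N}e^{-\rho^2/4}$ for all $N$, and you then kill the Theorem \ref{Bth1} terms by this decay. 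But the hypothesis of the proposition bounds only $f$ itself ($|z|^k|f(z)|\leq C_k e^{-|z|^2/4}$), not any derivative of $f$; rapid decay of a smooth function gives no control whatsoever on $u',u'',\dots$ (e.g.\ $u$ rapidly decaying with $u'$ unbounded), and the coefficients of $g$ involve derivatives of $\tilde a$ up to order $p+q$. So the assertion that ``the decay propagates through the Leibniz expansion'' is exactly the unproved point, and with it the vanishing of the top coefficient, on which your final ODE step rests.

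The paper's proof goes in the opposite direction and never differentiates the data: from Theorem \ref{Bth1} each harmonic coefficient of $P_1(\widetilde Z)f$ is an explicit finite combination of $e^{\rho^2/4}\rho^{-m}$ and $e^{-\rho^2/4}\rho^{-m}$; reading the expansion of $\widetilde Z_1^*(\tilde a P_{s,t})$ (the case $p=1,q=0$, with the top coefficient $\frac{1}{2\rho}\widetilde D^*\tilde a$ and the lower one $(\frac{1}{2(\gamma-1)}\rho\widetilde D^*+1)\tilde a$) as first-order ODEs for $\tilde a$ and \emph{integrating} them, one obtains $\tilde a$ itself as a finite combination of such terms on $(B,\infty)$; only then is the assumed decay of $f$ (hence of $\tilde a$ alone) invoked to force all coefficients to vanish. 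Your argument can be repaired in the same way: instead of claiming the coefficients of $g$ decay, equate them to the explicit Theorem \ref{Bth1} expressions and solve the resulting system for $\tilde a$, whose solutions (particular plus the kernel of the $\widetilde D,\widetilde D^*$ operators you describe) are again of the form $e^{\pm\rho^2/4}\rho^{-m}$, and then use the decay of $\tilde a$. A minor further point: with the left-invariant fields $\widetilde Z$ the roles of $\widetilde D$ and $\widetilde D^*$ attached to the exponents $p$ and $q$ come out opposite to what Lemma \ref{lemma5C} gives for the $\tilde A$'s, so your top-coefficient formula needs that bookkeeping checked, though this does not affect the structure of the argument.
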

\begin{proof}
We first prove the result in case when $p=1,q=0$. The argument for general
$p,q$ is very similar. In this case, by Lemma \ref{lemma6C}, we have
$\widetilde Z_1^*f\in Z_{B,\infty}(\mathbb C^n).$ Since $f=\tilde{a}P_{s,t}$,
a similar calculation as in \cite{RS}, p.2516-17, gives that
\[\widetilde Z_1^*f=\frac{{1}}{2\rho}\widetilde D^*\tilde{a}P_{s+1,t}+
\left\{\left(\frac{{1}}{2(\gamma-1)}\rho\widetilde D^*+1\right)\tilde{a}\right\}
\frac{\partial P_{s,t}}{\partial\bar{z_1}},\]
where $\gamma=n+s+t$. Since $\tilde A_1f\in Z_{B,\infty}(\mathbb C^n)$,
by Lemma \ref{lemma8C} and Theorem \ref{Bth1}, it follows that
\[\left(\frac{{1}}{2(\gamma-1)}\rho\widetilde D^*+1\right)\tilde{a}=
\sum_{i=1}^{s}~c'_i~e^{\frac{1}{4}\rho^2}\rho^{-2(\gamma-1-i)}+
\sum_{k=1}^{t-1}~d'_k~e^{-\frac{1}{4}\rho^2}\rho^{-2(\gamma-1-k)}
\]
and
\[\frac{{1}}{2\rho}\widetilde D^*\tilde{a}
=\sum_{i=1}^{s+1}c_ie^{\frac{1}{4}\rho^2}\rho^{-2(\gamma+1-i)}+
\sum_{k=1}^{t}d_ke^{-\frac{1}{4}\rho^2}\rho^{-2(\gamma+1-k)}\]
Solving these equations for $\tilde a$ we get
\[ \tilde{a}(\rho)=\sum_{i=1}^{s+1}~C_i~e^{\frac{1}{4}\rho^2}\rho^{-2(\gamma-i)}+
\sum_{k=1}^{t}~D_k~e^{-\frac{1}{4}\rho^2}\rho^{-2(\gamma-k)},~C_i,~D_k\in\mathbb C.\]
But the given decay condition on the function $f$ then implies that $\tilde{a}(\rho)=0$,
whenever $\rho>B$. Hence $f=0$ for $\rho>B$. For the weight $z_1^p\bar z_2^q$,
the computations are similar and therefore omitted.
\end{proof}

Next we take up the case of Euclidean weighed spherical means. We prove the following
lemma which is key to the proof of Theorem \ref{Cth4}. As in \cite{EK}, let
\[f_{lm}(x)=d_s\int_{SO(n)} f(\tau^{-1}x)t^{lm}_{\pi_s}(\tau)d\tau,\]
for any $l,m$ with $1\leq l,m\leq d_s.$

\begin{lemma}\label{lemma10C}
Let $f\ast\mu_{\rho,j}^k(x)=0,$ for all $x\in\mathbb R^n,~ \rho>|x|+B $
and for all $j,1\leq j\leq d_k$. Then $f_{lm}\ast\mu_{\rho,j}^k(x)=0,$
for all $x\in\mathbb R^n$ and for all $\rho>|x|+B.$
\end{lemma}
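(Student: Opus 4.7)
The plan is to imitate the proof of Lemma~\ref{lemma7C} in the Euclidean setting, substituting $SO(n)$ for $U(n)$ and ordinary convolution for twisted convolution. Only two facts are really needed: rotation invariance of the surface measure $\mu_\rho$, and $SO(n)$-invariance of the space $H_k$ of solid spherical harmonics of degree $k$.

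I would begin by substituting the definition of $f_{lm}$ into $f_{lm}\ast\mu_{\rho,j}^{k}(x)$ and interchanging the order of integration, which is justified since both measures are finite and $f$ is assumed smooth. This gives
\[
f_{lm}\ast\mu_{\rho,j}^{k}(x) = d_s \int_{SO(n)} t_{\pi_s}^{lm}(\tau)\int_{S_\rho(o)} f\bigl(\tau^{-1}x+\tau^{-1}y\bigr) P_{kj}(y)\, d\mu_\rho(y)\, d\tau.
\]
Next I would use the $SO(n)$-invariance of $\mu_\rho$ to change variables $y \mapsto \tau y$ in the inner integral, which moves the rotation from the argument of $f$ onto the polynomial. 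Because $H_k$ is $SO(n)$-invariant, the transformed polynomial can be expanded in the orthonormal basis $\{P_{ki}\}_{i=1}^{d_k}$ as $P_{kj}(\tau y)=\sum_{i=1}^{d_k} t_{\pi_k}^{ij}(\tau)P_{ki}(y)$, yielding
\[
f_{lm}\ast\mu_{\rho,j}^{k}(x) = d_s \sum_{i=1}^{d_k} \int_{SO(n)} t_{\pi_s}^{lm}(\tau)\,t_{\pi_k}^{ij}(\tau)\,\bigl(f\ast\mu_{\rho,i}^{k}\bigr)(\tau^{-1}x)\, d\tau.
\]

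To conclude, I would observe that $|\tau^{-1}x|=|x|$ for every $\tau\in SO(n)$, so the hypothesis $\rho>|x|+B$ is equivalent to $\rho>|\tau^{-1}x|+B$. The assumption of the lemma then forces $(f\ast\mu_{\rho,i}^{k})(\tau^{-1}x)=0$ for every $\tau\in SO(n)$ and every $i$, making the integrand vanish identically. I do not foresee any real obstacle here; this is a routine invariance argument exactly parallel to Lemma~\ref{lemma7C}, which is presumably why the author indicates it can be omitted.
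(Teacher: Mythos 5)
Your proposal is correct and takes essentially the same route as the paper's own (very terse) proof: interchange the $SO(n)$-average with the spherical integral, use the rotation invariance of $\mu_\rho$ to move $\tau$ onto the weight, expand $P_{kj}(\tau y)$ in the orthonormal basis of the $SO(n)$-invariant space $H_k$, and apply the hypothesis at $\tau^{-1}x$, noting $|\tau^{-1}x|=|x|$. The paper merely records the key displayed identity and leaves these details implicit, so your write-up just fills in what the author omitted.
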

\begin{proof}
Since space $H_k$ is $SO(n)$-invariant by change of variables, it follows that
\[f_{lm}\ast\mu_{\rho,j}^k(x)=d_s\int_{SO(n)}t^{lm}_{\pi_s}(\tau)
\int_{S_\rho(o)}f(\tau^{-1}x+y)P_{kj}(\tau y)d\mu_\rho(w)d\tau=0,\]
whenever $x\in \mathbb R^n$ and $\rho>|x|+B$.
\end{proof}
For $x=(x_1,x_2,x_3\ldots,x_n)\in\mathbb R^n,$ we realize the function
$f(x_1,x_2,x_3,\ldots,x_n)$ as $f(x_1+ix_2,x_3,\ldots,x_n)$. Let $z_1=x_1+ix_2.$
Then we can write \[\partial_{\bar z_1}=2\dfrac{\partial}{\partial\bar z_1}=
\dfrac{\partial}{\partial x_1}+i\dfrac{\partial}{\partial x_2}.\]

We need the following result from \cite{EK}.
Let $Z_{B,\infty}$ be a class of continuous functions on
 $\Ann(B,\infty)=\{x\in\mathbb R^n: B<|x|<\infty\}$
such that $f\ast\mu_r(x)=0$ for all $x\in\mathbb R^n$ and $r>|x|+B.$

\begin{theorem}\label{Bth5}\emph{\cite{EK}}
A necessary and sufficient condition for $f\in Z_{B,\infty}(\mathbb R^n)$ is that
for all $k \in \mathbb Z_+,$ the spherical harmonic coefficients $a_{kj}$ of $f$
satisfy the following conditions.
\[a_{kj}(\rho) = \sum_{i=0}^{k-1}\alpha_{kj}^{i}\rho^{k-d-2i},\quad \alpha_{kj}^{i} \in\mathbb C,\]
for all $k > 0,$ $1 \leq j \leq d_k,$ and $a_0(\rho) = 0$ whenever $r<\rho<R.$
\end{theorem}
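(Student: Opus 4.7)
The plan is to transport the argument of Theorem~\ref{Cth3} into the Euclidean setting: Lemma~\ref{lemma10C} replaces Lemma~\ref{lemma8C}, ordinary partial derivatives $\partial_j$ replace the twisted invariant operators $\widetilde Z_j$, and Theorem~\ref{Bth5} replaces Theorem~\ref{Bth1}.

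First I would invoke Lemma~\ref{lemma10C} to reduce, as in the proof of Proposition~\ref{Cprop7}, to the case $f(x)=\tilde a(|x|)P_s(x)$ for a fixed solid harmonic $P_s\in H_s$, with $\tilde a$ inheriting the rapid decay of $f$.

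The central step is a Euclidean counterpart of Lemma~\ref{lemma6C}: under the hypothesis, $P_{kj}(\partial)f$ lies in $Z_{B,\infty}(\mathbb R^n)$. For the baseline case $k=1$ with weight $y_j$, apply the divergence theorem to $\int_{r<|y|<\rho}\partial_{y_j}[f(x+y)]\,dy$ on an annulus with $r,\rho>|x|+B$; the two boundary surface integrals are, up to surface-measure normalization, the weighted means $f*\mu^1_{\rho,j}$ and $f*\mu^1_{r,j}$, which vanish by hypothesis, so the volume integral vanishes, and differentiating in $\rho$ yields $(\partial_j f)*\mu_\rho(x)=0$. For general $k$, iterate the divergence theorem $k$ times; the intermediate boundary weights are polynomials of degree $\leq k$, which I decompose via $y^\beta=\sum_j |y|^{2j}H_{|\beta|-2j}(y)$ and simplify on $|y|=\rho$ using $|y|^2=\rho^2$, so that only the degree-$k$ harmonic component of the original weight carries information from the hypothesis while the lower-degree harmonic pieces cancel between bounding spheres or are handled inductively by applying the $k=1$ identity to the appropriate partial derivatives of $f$. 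Differentiating the resulting annular identity in $\rho$ gives $(P_{kj}(\partial)f)*\mu_\rho(x)=0$ for $\rho>|x|+B$.

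Having $P_{kj}(\partial)f\in Z_{B,\infty}(\mathbb R^n)$, I would then expand $P_{kj}(\partial)[\tilde a(|x|)P_s(x)]$ by the Clebsch--Gordan decomposition as a finite sum $\sum_m \tilde c_m(|x|)Q_m(x)$, with $Q_m$ running over solid harmonics of degrees in $\{|s-k|,|s-k|+2,\dots,s+k\}$ and each $\tilde c_m$ an explicit linear differential expression in $\tilde a$ and its radial derivatives. Theorem~\ref{Bth5} then constrains each $\tilde c_m(\rho)$ on $\{\rho>B\}$ to be a polynomial $\sum_i\alpha_m^i\rho^{\deg(Q_m)-n-2i}$; inverting the resulting triangular system for $\tilde a$ yields an explicit finite linear combination of powers of $\rho$, which the rapid-decay hypothesis $|x|^m f(x)$ bounded for all $m$ forces to be identically zero. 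Hence $\tilde a\equiv 0$ on $\{\rho>B\}$, and so $f\equiv 0$ whenever $|x|>B$.

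\textbf{Main obstacle.} The hardest step is the Euclidean analogue of Lemma~\ref{lemma6C}, i.e.\ establishing $P_{kj}(\partial)f\in Z_{B,\infty}(\mathbb R^n)$ for general $k$. The twisted proof uses a single integration by parts in one complex variable together with Lemma~\ref{lemma5C} to strip the weight in one clean step; the Euclidean iteration instead requires careful bookkeeping of how non-harmonic intermediate polynomial weights split on each sphere under the canonical decomposition, and verification that only the $H_k$-components carry information from the hypothesis while the residual $|y|^{2j}H_{k-2j}$-pieces either cancel between bounding spheres or are absorbed by a lower-degree induction on $k$.
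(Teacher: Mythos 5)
Your proposal does not prove the statement in question. Theorem~\ref{Bth5} is the Epstein--Kleiner characterization: for a continuous $f$ whose \emph{unweighted} spherical means $f\ast\mu_r(x)$ vanish for all $r>|x|+B$, each spherical harmonic coefficient $a_{kj}(\rho)$ must be (and, conversely, may be) a linear combination $\sum_{i=0}^{k-1}\alpha_{kj}^{i}\rho^{k-d-2i}$, with $a_0\equiv 0$. No decay hypothesis appears, no polynomial weights appear, and the conclusion is an if-and-only-if description of the radial coefficients, not the vanishing of $f$. What you have written is instead an outline of the proof of Theorem~\ref{Cth4} (equivalently Proposition~\ref{Cprop2}): you assume the \emph{weighted} means $f\ast\mu^k_{r,j}$ vanish together with rapid decay, reduce to $f=\tilde a P_s$ via Lemma~\ref{lemma10C}, establish $P_{kj}(\partial)f\in Z_{B,\infty}(\mathbb R^n)$ by iterated integration by parts, and then \emph{invoke Theorem~\ref{Bth5}} to pin down $\tilde a$ and conclude $f=0$ for $|x|>B$. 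As an argument for Theorem~\ref{Bth5} itself this is circular --- the theorem is the key input in your final step --- and it terminates at the wrong conclusion.

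For the record, the paper does not prove Theorem~\ref{Bth5} either; it is quoted verbatim from \cite{EK} and used as a black box. The genuine proof is of a different nature: one expands $f$ in spherical harmonics, shows that the vanishing of the (unweighted) means over the annular family forces each coefficient $a_{kj}$ to satisfy an explicit ordinary differential equation in the radial variable, identifies the stated powers $\rho^{k-d-2i}$, $0\leq i\leq k-1$, as a basis of the corresponding $k$-dimensional solution space, and verifies conversely that coefficients of this form do yield vanishing means. None of that machinery is present in your proposal. Your outline is broadly consistent with the paper's route to Theorem~\ref{Cth4} (Lemma~\ref{lemma10C}, then Lemma~\ref{lemma2C}, then Theorem~\ref{Bth5} and the decay condition), but that is a different theorem from the one you were asked to prove.
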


\begin{lemma}\label{lemma2C}
Let $P_k(x)=(x_1+ix_2)^k$. Suppose $f\ast\mu_{\rho}^k(x)=0,$ for all $ x\in\mathbb R^n$
and for all $\rho>|x|+B$. Then ~$\partial_{\bar z_1}^k f\ast\mu_\rho(x)=0$~ for all
$x\in\mathbb R^n$ and for all $\rho>|x|+B.$ Equivalently,
$\partial_{\bar z_1}^k f\in Z_{B,\infty}(\mathbb R^n)$.
\end{lemma}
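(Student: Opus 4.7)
The plan is to emulate the divergence-theorem argument used for Lemma \ref{lemma6C} in the twisted setting, with the single weight $(w_1+iw_2)^k$ replacing $w_1^p\bar w_2^q$. Writing $\zeta_1=w_1+iw_2$, the crucial observation is that $\zeta_1^{k-1}$ is annihilated by $\partial_{w_1}+i\partial_{w_2}$, so integrating by parts once loses one power of $\zeta_1$ at the cost of one $\partial_{\bar z_1}$ applied to $f$. Iterating this $k$ times converts the weighted mean into the unweighted mean of $\partial_{\bar z_1}^k f$.

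Concretely, I would fix $x\in\mathbb R^n$ and $\rho>|x|+B$, pick $r$ with $|x|+B<r<\rho$, and on the annulus $\mathrm{Ann}(r,\rho)$ consider the vector field
\[
F(w)=\bigl(f(x+w)\zeta_1^{k-1},\;i f(x+w)\zeta_1^{k-1},\;0,\ldots,0\bigr).
\]
Since $\zeta_1^{k-1}$ is holomorphic in $\zeta_1$, the product rule and the chain rule give
\[
\mathrm{div}\,F=(\partial_{w_1}+i\partial_{w_2})\bigl(f(x+w)\zeta_1^{k-1}\bigr)=(\partial_{\bar z_1}f)(x+w)\,\zeta_1^{k-1}.
\]
On $|w|=s$ the outward unit normal is $w/s$ and $F\cdot(w/s)=s^{-1}\zeta_1^{k}f(x+w)$, so the divergence theorem yields
\[
\int_{\mathrm{Ann}(r,\rho)}(\partial_{\bar z_1}f)(x+w)\,\zeta_1^{k-1}\,dw
=c_n\bigl[\rho^{n-2}(f\ast\mu_\rho^{k})(x)-r^{n-2}(f\ast\mu_r^{k})(x)\bigr]=0,
\]
because the hypothesis forces both $f\ast\mu_\rho^{k}(x)$ and $f\ast\mu_r^{k}(x)$ to vanish as soon as $r,\rho>|x|+B$.

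Next, I would rewrite the left-hand side in polar coordinates and differentiate in $\rho$ to obtain
\[
\bigl((\partial_{\bar z_1}f)\ast\mu_\rho^{k-1}\bigr)(x)=0\quad\text{for every }\rho>|x|+B.
\]
The full assertion then follows by induction on $k$: the base case $k=1$ is exactly this identity (since $\zeta_1^0=1$), and for $k\ge 2$ the same argument applied to $\partial_{\bar z_1}f$ with $k-1$ in place of $k$ reduces the $k$-case to the $(k-1)$-case. After $k$ iterations we arrive at $(\partial_{\bar z_1}^k f)\ast\mu_\rho(x)=0$ for all $\rho>|x|+B$, which is the claim.

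The only real subtlety is bookkeeping at the annulus boundary: to kill the inner-sphere contribution one must be able to choose $r$ strictly greater than $|x|+B$, which is possible precisely because the hypothesis controls $f\ast\mu_s^k$ on the \emph{open} ray $s>|x|+B$. The smoothness of $f$ is exactly what justifies commuting $\partial_{\bar z_1}$ with the polar integration, and no decay hypothesis on $f$ is required for this lemma.
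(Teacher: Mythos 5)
Your proposal is correct and is essentially the paper's own argument: the same annulus integration (Green's/divergence theorem) with the holomorphic weight $\zeta_1^{k-1}$, cancellation of both boundary spheres by the hypothesis $r,\rho>|x|+B$, polar decomposition and differentiation in $\rho$ to get $(\partial_{\bar z_1}f)\ast\mu_\rho^{k-1}(x)=0$, followed by induction on $k$. The only difference is presentational (you phrase the step via an explicit vector field and make the induction and the choice of inner radius explicit, which the paper leaves implicit).
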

\begin{proof}
We first prove
\begin{equation}\label{Cexp5}
\partial_{\bar z_1}f\ast\mu_\rho^{k-1}(x)=0,~x\in\mathbb R^n \text{ for } \rho>|x|+B.
\end{equation}
Let
$ {\partial}_{\bar w_1}=2\dfrac{\partial}{\partial\bar{w_1}}=
\dfrac{\partial}{\partial y_1}+i\dfrac{\partial}{\partial y_2}, ~w_1=y_1+iy_2.$
Then
\begin{align*}
\int_{\Ann(r,\rho)}\partial_{\bar w_1}\left(f(z_1+w_1,x_3+y_3,\ldots,x_n+y_n)w_1^{k-1}\right)dy\\
=\int_{|y|=\rho}f(z_1+w_1,x_3+y_3,\ldots,x_n+y_n)w_1^{k-1}\frac{w_1}{\rho}d\mu_{\rho}(y)\\
-\int_{|y|=r}f(z_1+w_1,x_3+y_3,\ldots,x_n+y_n)w_1^{k-1}\frac{w_1}{r}d\mu_{r}(w)
=0.
\end{align*}
Thus we have the following equation
\[\int_{\Ann(r,\rho)}\partial_{\bar w_1}\left(f(z_1+w_1,x_3+y_3,\ldots,x_n+y_n)w_1^{k-1}\right)dy=0.\]
Rewriting this equation into polar form,
we get
\[\int_{s=r}^{\rho} \int_{|y|=s}
\partial_{\bar w_1}\left(f(z_1+w_1,x_3+y_3,\ldots,x_n+y_n)w_1^{k-1}\right)d\mu_s(y)~s^{n-1}ds=0.
\]
Differentiating the above equation with respect to $\rho$, we have
\[\int_{|y|=
\rho}\partial_{\bar w_1}f\left(f(z_1+w_1,x_3+y_3,\ldots,x_n+y_n)w_1^{k-1}\right)d\mu_{\rho}(w)=0,
\]
whenever $x\in \mathbb R^n$ and $\rho> |x|+B$. Computing the differential
inside integral, we obtain the Equation (\ref{Cexp5}). Proceeding in a similar way,
it can be shown that $\partial_{\bar z_1}^k f\ast\mu_\rho^k(x)=0,$
whenever $x\in\mathbb R^n $ and $\rho>|x|+B.$
\end{proof}

To prove Theorem \ref{Cth4}, in view of Lemma \ref{lemma10C}, it is enough to
prove the following result.
\begin{proposition}\label{Cprop2}
Let $f(x)=\tilde a(|x|)P_s(x)\in C^\infty(\mathbb R^n)$ such that $|x|^mf(x)$ is bounded
for each $m\in\mathbb Z_+$. Let $f\ast\mu_\rho^k(x)=0,$ for all $x\in\mathbb R^n,~\rho>|x|+B$.
Then $f=0$ whenever $|x|>B$.
\end{proposition}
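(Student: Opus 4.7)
The plan is to adapt the argument of Proposition \ref{Cprop7} to the Euclidean setting, substituting Lemma \ref{lemma2C} and Theorem \ref{Bth5} for their twisted analogues. After the reductions of Lemma \ref{lemma10C}, it suffices to treat the weight $P_k(x)=(x_1+ix_2)^k$, since this is the specific harmonic for which Lemma \ref{lemma2C} is stated. Applying that lemma converts the hypothesis into $\partial_{\bar z_1}^k f\in Z_{B,\infty}(\mathbb R^n)$, and Theorem \ref{Bth5} then asserts that each spherical-harmonic coefficient of $\partial_{\bar z_1}^k f$ on $\{\rho>B\}$ is a finite linear combination of real powers of $\rho$.

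Next I would expand $\partial_{\bar z_1}^k(\tilde a(\rho)P_s(x))$ via the Leibniz rule. A short induction shows $\partial_{\bar z_1}^{j}\tilde a(\rho)=z_1^{\,j}(M^{j}\tilde a)(\rho)$, where $M=\tfrac{1}{\rho}\tfrac{d}{d\rho}$ and $z_1=x_1+ix_2$, using $\partial_{\bar z_1}z_1=0$. Meanwhile each application of $\partial_{\bar z_1}$ to $P_s$ lowers its polynomial degree by one. Hence
\[\partial_{\bar z_1}^k f(x)=\sum_{j=0}^{k}\binom{k}{j}\,(M^{j}\tilde a)(\rho)\,z_1^{\,j}\,\partial_{\bar z_1}^{k-j}P_s(x).\]
Decomposing each product $z_1^{\,j}\,\partial_{\bar z_1}^{k-j}P_s(x)$ into its spherical-harmonic components yields a finite list of radial coefficients, each of which is an explicit linear combination of the functions $(M^{j}\tilde a)(\rho)$.

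The third step is to compare this expansion against Theorem \ref{Bth5}. Isolating the top-degree spherical harmonic first -- obtained from the $j=k$ term, whose angular factor is $z_1^{\,k}$ -- gives a single equation expressing $(M^{k}\tilde a)(\rho)$ as a finite sum of real powers of $\rho$ on $\{\rho>B\}$. Inducting downward through the sub-leading harmonic components then produces a finite sequence of Euler-type ordinary differential equations, whose integration forces $\tilde a(\rho)$ itself to be a finite linear combination of real powers of $\rho$ on the region $\{\rho>B\}$. The rapid-decay hypothesis that $\rho^{m}\tilde a(\rho)$ is bounded for every $m\in\mathbb Z_+$ kills any such nontrivial combination, giving $\tilde a\equiv 0$ for $\rho>B$ and hence $f\equiv 0$ there.

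I expect the principal obstacle to be the algebraic bookkeeping in the second step: cleanly extracting the top-degree spherical harmonic out of each product $z_1^{\,j}\,\partial_{\bar z_1}^{k-j}P_s(x)$, and then propagating the induction through all sub-leading components so that each successive equation really does pin down a new piece of $\tilde a$. This is the Euclidean counterpart of the coefficient-peeling argument in Proposition \ref{Cprop7}; the absence of the exponential prefactors $e^{\pm\rho^2/4}$ that appear in Theorem \ref{Bth1} simplifies the ODEs to an Euler type, but not the combinatorial work of setting them up.
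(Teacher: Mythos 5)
Your proposal is correct and follows essentially the same route as the paper: reduce via Lemma \ref{lemma10C} to $f=\tilde a P_s$ and the single weight $(x_1+ix_2)^k$, convert the hypothesis through Lemma \ref{lemma2C} into $\partial_{\bar z_1}^k f\in Z_{B,\infty}(\mathbb R^n)$, compare spherical-harmonic coefficients using Theorem \ref{Bth5} to force $\tilde a$ to be a finite combination of powers on $\rho>B$, and kill it with the rapid-decay hypothesis. The only organizational difference is that the paper carries out the case $k=1$ explicitly via the gradient decomposition $x_jP_s=P_{s+1}^j+\tfrac{\rho^2}{n+2s-2}\partial_jP_s$ from \cite{EK} and handles general $k$ by induction, whereas you expand $\partial_{\bar z_1}^k f$ in one stroke by Leibniz and peel harmonic components from the top, which amounts to the same iteration.
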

\begin{proof}
First we find $\tilde{a}(\rho)$ for $k=1$. For this, by Lemma \ref{lemma2C}
we have $\bar\partial_1 f\in Z_{B,\infty}(\mathbb R^n)$. A computation
similar to that in \cite{EK}, p.445-6, we can write
\[
\frac{\partial f}{\partial x_j}=\frac{{1}}{\rho}\frac{\partial\tilde{a}}{\partial\rho}P_{s+1}^j +
\left\{\left(\frac{{1}}{n+2(s-1)}\rho\frac{\partial}{\partial\rho}+1\right)\tilde{a}\right\}\frac{\partial P_s}{\partial x_j},
\]
where $P_{s+1}^j\in H_{s+1}$. Therefore,
\[\bar\partial_1 f=\frac{{1}}{\rho}\frac{\partial\tilde{a}}{\partial\rho}P_{s+1}+
\left\{\left(\frac{{1}}{n+2(s-1)}\rho\frac{\partial}{\partial\rho}+1\right)\tilde{a}\right\}\bar\partial_1P_s
\]
for some $P_{s+1}\in H_{s+1}$. By Lemmas $[\ref{lemma10C},\ref{lemma2C}]$
and Theorem \ref{Bth5}, it follows that
\[
\frac{{1}}{\rho}\frac{\partial\tilde{a}}{\partial\rho}=\sum_{i=0}^{s}~c_i\rho^{-n-2i}\]
and
\[\left(\frac{{1}}{n+2(s-1)}\rho\frac{\partial}{\partial\rho}+1\right)\tilde{a}=\sum_{i=0}^{s-2}d_i\rho^{-n-2i},
\]
where $c_i, d_i\in \mathbb C$.
Solving these equations for $\tilde a$ we get
\[
\tilde{a}(\rho)=\sum_{i=-1}^{s-1}~c_i'\rho^{-n-2i}, ~c_i'\in\mathbb C.
\]
The given decay condition on the function $f$ then implies that
$\tilde{a}(\rho)=0$, whenever $\rho>B$. Hence $f=0$ for $\rho>B$.
The case of general weight $(x_1+ix_2)^k$ follows from induction.
This completes the proof.
\end{proof}

\noindent{\bf Acknowledgements:} The author wishes to thank Rama Rawat for several
fruitful discussions during preparation of this article. The author would also like
to gratefully acknowledge the support provided by the Department of Atomic Energy,
government of India.

\end{document}